\newtheorem{theorem}{Theorem}[section]
\newtheorem{lemma}[theorem]{Lemma}
\newtheorem{remark}[theorem]{Remark}
\newtheorem{example}[theorem]{Example}
\begin{document}

\title{Linear quadratic leader-follower stochastic differential games
for mean-field switching diffusions\thanks{This work was supported by
the National Natural Science Foundation of China (11801072, 61873325, 11831010, 11771079),
the Fundamental Research Funds for the Central Universities (2242021R41175, 2242021R41082),
and a start-up fund at the Southern University of Science and Technology (Y01286120).}}

\author{
Siyu Lv\thanks{School of Mathematics, Southeast University,
Nanjing 211189, China (lvsiyu@seu.edu.cn).}
\and
Jie Xiong\thanks{Department of Mathematics
and National Center for Applied Mathematics (Shenzhen),
Southern University of Science and Technology,
Shenzhen 518055, China (xiongj@sustech.edu.cn).}
\and
Xin Zhang\thanks{School of Mathematics, Southeast University,
Nanjing 211189, China (x.zhang.seu@gmail.com).}
}

\date{}

\maketitle

\begin{abstract}
In this paper, we consider a linear quadratic (LQ) leader-follower stochastic differential game
for \emph{regime switching} diffusions with \emph{mean-field} interactions. One of the salient
features of this paper is that \emph{conditional} mean-field terms are included in the state equation
and cost functionals. Based on stochastic maximum principles (SMPs), the follower's problem
and the leader's problem are solved sequentially and an \emph{open-loop} Stackelberg equilibrium
is obtained. Further, with the help of the so-called four-step scheme, the corresponding
Hamiltonian systems for the two players are decoupled and then the open-loop Stackelberg equilibrium
admits a \emph{state feedback representation} if some \emph{new-type} Riccati equations are solvable.
\end{abstract}

\textbf{Keywords:} leader-follower game, linear quadratic problem, Markov chain,
mean-field interaction, Riccati equation

\section{Introduction}

The leader-follower game involves two players with \emph{asymmetric} roles,
one called the leader and the other called the follower. In the game,
the leader first announces her action, and the follower, according to the
leader's action, chooses an optimal response to minimize his cost functional.
Next, the leader has to take the follower's optimal response into account and
chooses an optimal action to minimize her cost functional. Yong \cite{Yong2002}
first considered a linear quadratic (LQ) leader-follower stochastic differential
game. Then, within the LQ framework, the result was extended by, e.g.,
\cite{SWX2016,Moon2021,LXY2021} in different settings.

Mean-field stochastic differential equations (SDEs) were initially suggested to
describe physical systems involving a \emph{large} number of \emph{interacting}
particles. In the dynamics of a mean-field SDE, one replaces the interactions
of all the particles by their \emph{average} or \emph{mean} to reduce the complexity
of the problem. In the last decade, since Buchdahn et al. \cite{BDLP2009,BLP2009}
and Carmona and Delarue \cite{CD2013-1,CD2013-2,CD2015} introduced the mean-field
backward SDEs (BSDEs) and mean-field forward-backward SDEs (FBSDEs), optimal control
problems, especially stochastic maximum principles (SMPs), for mean-field systems
have become a popular topic; see, for example,
\cite{Li2012,Yong2013,WZZ2014,CDL2016,CZ2016,CD2018,ZSX2018,LSX2019,ABC2019,WangWu2022}.

Another feature of this paper is the use of a regime switching model, in which
the \emph{continuous} state of the LQ problem and the \emph{discrete} state of
the Markov chain coexist; see
\cite{Zhang2001,ZhouYin2003,SSZ2011,Zhu2011,ZLX2021,LWZ2022,LX2022}
for more information and applications of regime switching models.
Recently, Nguyen, Yin, and Hoang \cite{NYH2020} established the law of large numbers
for systems with regime switching and mean-field interactions, where the mean-field
limit was characterized as the \emph{conditional expectation} of the solution to
a conditional mean-field SDE with regime switching (see also Remark \ref{mean field limit}).
This work paves the way for treating mean-field optimal control problems with
regime switching; see \cite{NNY2020,NYN2021,BDTY2020,JLSY}.

In this paper, we consider an LQ leader-follower stochastic differential game
for mean-field switching diffusions. Based on the SMP in Nguyen, Nguyen, and
Yin \cite{NNY2020}, an open-loop optimal control for the follower is obtained.
Then, by applying the \emph{four-step scheme} developed by Ma, Protter, and Yong
\cite{MPY1994}, we derive its (anticipating) state feedback representation
in terms of two Riccati equations and an auxiliary BSDE. Knowing the follower's
optimal control, the leader faces a state equation which is a conditional mean-field
FBSDE with regime switching. We also utilize the SMP to obtain an open-loop optimal
control for the leader. Then, by the \emph{dimensional augmentation approach}
in Yong \cite{Yong2002}, a \emph{non-anticipating} state feedback representation
is derived in terms of two Riccati equations. As a consequence, the follower's
optimal control can be also represented in a non-anticipating way.

The rest of this paper is organized as follows. In the next, we present an example
which motivates us to study the leader-follower problem in this paper.
In Section \ref{Problem formulation}, we formulate the problem and provide some
preliminary results. In Sections \ref{follower problem} and \ref{leader problem},
we solve the LQ problems for the follower and the leader, respectively. Finally,
Section \ref{conclusion} concludes the paper.

\emph{Motivation: a pension fund optimization problem}.
Typically, in a defined benefit (DB) scheme pension fund there are two participants
who make contributions: one is the leader (such as the company) with contribution
rate $u_{2}(\cdot)$, the other one is the follower (such as the individual) with
contribution rate $u_{1}(\cdot)$. The dynamics of the pension fund is described as
\begin{equation*}
\begin{aligned}
dF(t)=F(t)d\Delta(t)+\{u_{1}(t)+u_{2}(t)-\xi_{0}\}dt,
\end{aligned}
\end{equation*}
where $d\Delta(t)$ is the return rate of the fund and $\xi_{0}$ is the
pension scheme benefit outgo. The pension fund is invested in a bond
$S_{0}(t)$ and a stock $S(t)$, which are given by
\begin{equation*}
\left\{
\begin{aligned}
dS_{0}(t)=&r(\alpha(t))S_{0}(t)dt,\\
dS(t)=&b(\alpha(t))S(t)dt+\sigma(\alpha(t))S(t)dW(t),
\end{aligned}
\right.
\end{equation*}
where $r(i)$ is the interest rate, $b(i)$ is the appreciation rate,
and $\sigma(i)$ is the volatility corresponding to the market regime
$\alpha(t)=i$. Assume the proportions $\pi(\cdot)$ and $1-\pi(\cdot)$
of the fund are to be allocated in the stock and the bond, respectively.
Then we have
\begin{equation*}
\begin{aligned}
d\Delta(t)=\{r(\alpha(t))+[b(\alpha(t))-r(\alpha(t))]\pi(t)\}dt
+\sigma(\alpha(t))\pi(t)dW(t).
\end{aligned}
\end{equation*}
Therefore, the dynamics of the pension fund can be written as
\begin{equation*}
\begin{aligned}
dF(t)=\{r(\alpha(t))&F(t)+[b(\alpha(t))-r(\alpha(t))]\pi(t)F(t)\\
+&u_{1}(t)+u_{2}(t)-\xi_{0}\}dt+\sigma(\alpha(t))\pi(t)F(t)dW(t).
\end{aligned}
\end{equation*}
The cost functionals for the follower and the leader to minimize
are defined as
\begin{equation*}
\begin{aligned}
J_{k}(u_{1}(\cdot),u_{2}(\cdot))
=\frac{1}{2}E\bigg[\int_{0}^{T}\Big(u_{k}(t)-\xi_{k}\Big)^{2}dt
+\Big(E[F(T)|\mathcal{F}_{T}^{\alpha}]-\xi_{T}\Big)^{2}\bigg],\quad k=1,2,
\end{aligned}
\end{equation*}
respectively, where $\xi_{k}$, $k=1,2$, are the running benchmark, and $\xi_{T}$
is the terminal wealth target; both are introduced to measure the stability and
performance of the pension scheme.

The above pension fund optimization problem formulates naturally a special case of
the LQ leader-follower game considered in this paper. For more pension fund
optimization problems under various contexts, see \cite{JR2001,HWX2009,ZhengShi2020};
for a conditional mean-variance portfolio selection problem (as an application of
conditional mean-field control theory), see \cite{NYN2021}.

\section{Problem formulation and preliminaries}\label{Problem formulation}

Let $R^{n}$ be the $n$-dimensional Euclidean space with Euclidean norm $|\cdot|$
and Euclidean inner product $\langle\cdot,\cdot\rangle$. Let $R^{n\times m}$ be
the space of all $(n\times m)$ matrices. $A^{\top}$ denotes the transpose of
a vector or matrix $A$.
$I_{n}$ denotes the $(n\times n)$ identity matrix.

Let $[0,T]$ be a finite time horizon and $(\Omega,\mathcal{F},P)$ be a fixed
probability space on which a one-dimensional standard Brownian motion $W(t)$,
$t\in [0,T]$, and a Markov chain $\alpha(t)$, $t\in [0,T]$, are defined.
The Markov chain $\alpha(\cdot)$ takes values in a finite state space $\mathcal{M}$.
Let $Q=(\lambda_{ij})_{i,j\in\mathcal{M}}$ be the generator (i.e., the matrix of
transition rates) of $\alpha(\cdot)$ with $\lambda_{ij}\geq 0$ for $i\neq j$ and
$\sum_{j\in \mathcal{M}}\lambda_{ij}=0$ for each $i\in \mathcal{M}$. Assume that
$W(\cdot)$ and $\alpha(\cdot)$ are independent. For $t\geq0$, denote
$\mathcal{F}^{\alpha}_{t}=\sigma\{\alpha(s):0\leq s\leq t\}$
and $\mathcal{F}_{t}=\sigma\{W(s),\alpha(s):0\leq s\leq t\}$.
Let $\mathcal{L}_{\mathcal{F}}^{2}(R^{n})$ be the set of all $R^{n}$-valued
$\mathcal{F}_{t}$-adapted processes $x(\cdot)$ on $[0,T]$ such that
$E\int_{0}^{T}|x(t)|^{2}dt<\infty$.

The state of the system is described by the following linear conditional
mean-field SDE with regime switching on $[0,T]$:
\begin{equation}\label{system}
\left\{
\begin{aligned}
dx(t)=&\Big[A(\alpha(t))x(t)+\widehat{A}(\alpha(t))E[x(t)|\mathcal{F}_{t}^{\alpha}]
+B_{1}(\alpha(t))u_{1}(t)+B_{2}(\alpha(t))u_{2}(t)\Big]dt\\
+&\Big[C(\alpha(t))x(t)+\widehat{C}(\alpha(t))E[x(t)|\mathcal{F}_{t}^{\alpha}]
+D_{1}(\alpha(t))u_{1}(t)+D_{2}(\alpha(t))u_{2}(t)\Big]dW(t),\\
x(0)=&x_{0},
\end{aligned}
\right.
\end{equation}
where $x(\cdot)$ is the state process with values in $R^{n}$, $u_{1}(\cdot)$ and
$u_{2}(\cdot)$ are control processes taken by the follower and the leader, with
values in $R^{m_{1}}$ and $R^{m_{2}}$, respectively, and $A(i)$, $\widehat{A}(i)$,
$B_{1}(i)$, $B_{2}(i)$, $C(i)$, $\widehat{C}(i)$, $D_{1}(i)$, $D_{2}(i)$,
$i\in\mathcal{M}$, are constant matrices of suitable dimensions. It follows from
Nguyen et al. \cite{NNY2020,NYN2021} that,
for any $u_{1}(\cdot)\in \mathcal{L}_{\mathcal{F}}^{2}(R^{m_{1}})$
and $u_{2}(\cdot)\in \mathcal{L}_{\mathcal{F}}^{2}(R^{m_{2}})$,
SDE (\ref{system}) admits a unique solution
$x(\cdot)\in \mathcal{L}_{\mathcal{F}}^{2}(R^{n})$.
Then, $\mathcal{U}_{1}\doteq \mathcal{L}_{\mathcal{F}}^{2}(R^{m_{1}})$
and $\mathcal{U}_{2}\doteq \mathcal{L}_{\mathcal{F}}^{2}(R^{m_{2}})$
are called the admissible control sets for the follower and the leader, respectively.

The cost functionals for the follower and the leader to minimize are defined as
\begin{equation}\label{cost functionals}
\begin{aligned}
J_{k}(u_{1}(\cdot),u_{2}(\cdot))
=&\frac{1}{2}E\bigg[\int_{0}^{T}\bigg(\Big\langle Q_{k}(\alpha(t))x(t),x(t)\Big\rangle
+\Big\langle \widehat{Q}_{k}(\alpha(t))E[x(t)|\mathcal{F}_{t}^{\alpha}],E[x(t)|\mathcal{F}_{t}^{\alpha}]\Big\rangle\\
&+\Big\langle N_{k}(\alpha(t))u_{k}(t),u_{k}(t)\Big\rangle\bigg)dt+\Big\langle G_{k}(\alpha(T))x(T),x(T)\Big\rangle\\
&+\Big\langle \widehat{G}_{k}(\alpha(T))E[x(T)|\mathcal{F}_{T}^{\alpha}],E[x(T)|\mathcal{F}_{T}^{\alpha}]\Big\rangle\bigg],
\quad k=1,2,
\end{aligned}
\end{equation}
respectively, where $Q_{k}(i)$, $\widehat{Q}_{k}(i)$, $N_{k}(i)$, $G_{k}(i)$, $\widehat{G}_{k}(i)$,
$k=1,2$, $i\in\mathcal{M}$, are constant symmetric matrices of suitable dimensions.
\begin{remark}\label{mean field limit}
In fact, SDE (\ref{system}) is obtained as the mean-square limit as $N\rightarrow\infty$
of a system of interacting particles in the following form:
\begin{equation*}
\left\{
\begin{aligned}
dx^{l,N}(t)=&\bigg[A(\alpha(t))x^{l,N}(t)+\widehat{A}(\alpha(t))\frac{1}{N}\sum_{l=1}^{N}x^{l,N}(t)
+B_{1}(\alpha(t))u_{1}(t)+B_{2}(\alpha(t))u_{2}(t)\bigg]dt\\
+\bigg[C(&\alpha(t))x^{l,N}(t)+\widehat{C}(\alpha(t))\frac{1}{N}\sum_{l=1}^{N}x^{l,N}(t)
+D_{1}(\alpha(t))u_{1}(t)+D_{2}(\alpha(t))u_{2}(t)\bigg]dW^{l}(t),\\
x^{l,N}(0)=&x_{0},\quad 1\leq l\leq N,
\end{aligned}
\right.
\end{equation*}
where $\{W^{l}(\cdot)\}_{l=1}^{N}$ is a collection of independent standard Brownian motions
and the Markov chain $\alpha(\cdot)$ serves as a common noise for all particles, which leads
to the conditional expectations rather than expectations in (\ref{system}).

Intuitively, since all the particles depend on the history of $\alpha(\cdot)$, their average
and thereby its limit as $N\rightarrow\infty$ should also depend on this process. This intuition
has been justified by the law of large numbers established by Nguyen et al. \cite[Theorem 2.1]{NYH2020},
which shows that the joint process $(\frac{1}{N}\sum_{l=1}^{N}x^{l,N}(\cdot),\alpha(\cdot))$
converges weakly to a process $(\mu_{\alpha}(\cdot),\alpha(\cdot))$, where
$(\mu_{\alpha}(t),\alpha(t))=(E[x(t)|\mathcal{F}_{t}^{\alpha}],\alpha(t))$, $0\leq t\leq T$,
and $x(\cdot)$ is exactly the solution of (\ref{system}).
\end{remark}
\begin{remark}
Note that the cost functionals $J_{k}$, $k=1,2$, defined by (\ref{cost functionals})
are standard in the LQ mean-field control literature (see \cite{Yong2013,NNY2020,NYN2021})
and, if we assume the Assumptions (A1) and (A2) given in Sections \ref{follower problem}
and \ref{leader problem} hold, then $J_{k}$ is convex with respect to $u_{k}$, $k=1,2$,
respectively. However, for LQ mean-field games of large-population systems, the tracking-type
cost functionals where one wants to keep the system states stay as much close as possible
to a function of the mean-field term are more frequently adopted (see \cite{HCM2007,LZ2008,CZ2016}).
\end{remark}
Now we explain the leader-follower feature of the game; see also Yong \cite{Yong2002}.
In the game, for any $u_{2}(\cdot)\in \mathcal{U}_{2}$ of the leader, the follower
would like to choose an optimal control $u_{1}^{*}(\cdot)\in \mathcal{U}_{1}$ so that
$J_{1}(u_{1}^{*}(\cdot),u_{2}(\cdot))$ achieves the minimum of $J_{1}(u_{1}(\cdot),u_{2}(\cdot))$
over $u_{1}(\cdot)\in \mathcal{U}_{1}$. Knowing the follower's optimal control $u_{1}^{*}(\cdot)$
(depending on $u_{2}(\cdot)$), the leader would like to choose an optimal control
$u_{2}^{*}(\cdot)\in \mathcal{U}_{2}$ to minimize $J_{2}(u_{1}^{*}(\cdot),u_{2}(\cdot))$
over $u_{2}(\cdot)\in \mathcal{U}_{2}$.

In a more rigorous way, the follower wants to find an optimal map
$\Pi_{1}^{*}:\mathcal{U}_{2}\mapsto\mathcal{U}_{1}$ and the leader
wants to find an optimal control $u_{2}^{*}(\cdot)\in \mathcal{U}_{2}$ such that
\begin{equation*}
\left\{
\begin{aligned}
J_{1}(\Pi_{1}^{*}[u_{2}(\cdot)](\cdot),u_{2}(\cdot))
=&\inf_{u_{1}(\cdot)\in \mathcal{U}_{1}}J_{1}(u_{1}(\cdot),u_{2}(\cdot)),
\quad\forall u_{2}(\cdot)\in\mathcal{U}_{2},\\
J_{2}(\Pi_{1}^{*}[u_{2}^{*}(\cdot)](\cdot),u_{2}^{*}(\cdot))
=&\inf_{u_{2}(\cdot)\in \mathcal{U}_{2}}J_{2}(\Pi_{1}^{*}[u_{2}(\cdot)](\cdot),u_{2}(\cdot)).
\end{aligned}
\right.
\end{equation*}
If the above optimal pair $(\Pi_{1}^{*}[\cdot],u_{2}^{*}(\cdot))$ exists, it is called
an \emph{open-loop} Stackelberg equilibrium of the leader-follower stochastic differential game.

Then we present some preliminary results on the martingales associated with a Markov chain,
which are needed to establish the conditional mean-field BSDEs with regime switching.
For each pair $(i,j)\in \mathcal{M}\times \mathcal{M}$ with $i\neq j$, define
$[M_{ij}](t)=\sum_{0\leq s\leq t}1_{\{\alpha(s-)=i\}}1_{\{\alpha(s)=j\}}$
and $\langle M_{ij}\rangle(t)=\int_{0}^{t}\lambda_{ij}1_{\{\alpha(s-)=i\}}ds$,
where $1_{A}$ denotes the indicator function of a set $A$.
It follows from \cite{NNY2020,NYN2021} that the process
$M_{ij}(t)\doteq[M_{ij}](t)-\langle M_{ij}\rangle(t)$
is a purely discontinuous and square-integrable martingale with respect to
$\mathcal{F}_{t}^{\alpha}$, which is null at the origin. In this sense,
$[M_{ij}](t)$ and $\langle M_{ij}\rangle(t)$ are the optional and predictable
quadratic variations of $M_{ij}(t)$, respectively. In addition, we denote
$M_{ii}(t)=[M_{ii}](t)=\langle M_{ii}\rangle(t)\equiv0$ for each $i\in\mathcal{M}$.

Let $\mathcal{S}_{\mathcal{F}}^{2}(R^{n})$ be the set of all $R^{n}$-valued
$\mathcal{F}_{t}$-adapted c\`{a}dl\`{a}g processes $y(\cdot)$ on $[0,T]$ such that
$E\int_{0}^{T}|y(t)|^{2}dt<\infty$. Let $\mathcal{K}_{\mathcal{F}}^{2}(R^{n})$
be the set of all collections of $R^{n}$-valued $\mathcal{F}_{t}$-adapted processes
$\{k_{ij}(\cdot)\}_{i,j\in \mathcal{M}}$ on $[0,T]$ such that
$\sum_{i,j\in \mathcal{M}}E\int_{0}^{T}|k_{ij}(t)|^{2}d[M_{ij}](t)<\infty$
with $k_{ii}(t)\equiv0$ for each $i\in \mathcal{M}$. For convenience, we also denote
$k(\cdot)=\{k_{ij}(\cdot)\}_{i,j\in \mathcal{M}}$ and
\begin{equation*}
\begin{aligned}
\int_{0}^{t}k(s)\bullet dM(s)=\sum_{i,j\in \mathcal{M}}\int_{0}^{t}k_{ij}(s)dM_{ij}(s),\quad
k(s)\bullet dM(s)=\sum_{i,j\in \mathcal{M}}k_{ij}(s)dM_{ij}(s).
\end{aligned}
\end{equation*}
The following two lemmas play an important role in the subsequent analysis.
The proof of the first lemma is elementary and the proof of the second one
is similar to that of Xiong \cite[Lemma 5.4]{Xiong2008}. For completeness
and readers' convenience, their proofs are provided here.
\begin{lemma}\label{Lemma conditional E}
For any $\mathcal{F}_{t}$-adapted and square-integrable processes
$x(\cdot)$ and $y(\cdot)$, we have
\begin{equation*}
\begin{aligned}
E\Big[x(t)E[y(t)|\mathcal{F}_{t}^{\alpha}]\Big]
=E\Big[E[x(t)|\mathcal{F}_{t}^{\alpha}]y(t)\Big]
=E\Big[E[x(t)|\mathcal{F}_{t}^{\alpha}]E[y(t)|\mathcal{F}_{t}^{\alpha}]\Big].
\end{aligned}
\end{equation*}
\end{lemma}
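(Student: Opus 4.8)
The plan is to rely on the two fundamental properties of conditional expectation: the \emph{tower property} and the \emph{taking out what is known} rule. Throughout I abbreviate $\mathcal{G}\doteq\mathcal{F}_{t}^{\alpha}$ and suppress the time argument, writing $x$, $y$ for $x(t)$, $y(t)$.

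First I would dispose of integrability. Since $x,y\in L^{2}$, the Cauchy--Schwarz inequality shows $xy\in L^{1}$; moreover conditional Jensen (equivalently, the $L^{2}$-contraction property of $\mathcal{G}$-conditioning) gives $E[x\mid\mathcal{G}],\,E[y\mid\mathcal{G}]\in L^{2}$, so each of the three products appearing in the statement is integrable and every conditional expectation below is well defined.

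The core step is to prove that the first expression equals the third. I would condition on $\mathcal{G}$ via the tower property and then pull the $\mathcal{G}$-measurable factor $E[y\mid\mathcal{G}]$ outside the inner conditional expectation:
\[
E\big[x\,E[y\mid\mathcal{G}]\big]=E\Big[E\big[x\,E[y\mid\mathcal{G}]\,\big|\,\mathcal{G}\big]\Big]=E\Big[E[y\mid\mathcal{G}]\,E[x\mid\mathcal{G}]\Big].
\]
This yields the equality of the leftmost and rightmost terms. Exchanging the roles of $x$ and $y$ in exactly the same computation gives
\[
E\big[E[x\mid\mathcal{G}]\,y\big]=E\Big[E[x\mid\mathcal{G}]\,E[y\mid\mathcal{G}]\Big],
\]
so the middle term also equals the third, and all three coincide.

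There is essentially no hard part: the statement is a direct consequence of standard conditional-expectation identities, consistent with the paper's remark that the proof is elementary. The only point requiring any care is the integrability check ensuring the \emph{taking out what is known} rule applies, which the square-integrability hypothesis handles immediately.
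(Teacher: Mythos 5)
Your proof is correct and follows essentially the same route as the paper's: condition on $\mathcal{F}_{t}^{\alpha}$ via the tower property, pull out the $\mathcal{F}_{t}^{\alpha}$-measurable factor, and repeat with the roles of $x$ and $y$ exchanged. The only addition is your explicit integrability check, which the paper leaves implicit.
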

\begin{proof}
Note that
\begin{equation*}
\begin{aligned}
E\Big[x(t)E[y(t)|\mathcal{F}_{t}^{\alpha}]\Big]
=E\Big[E\Big(x(t)E[y(t)|\mathcal{F}_{t}^{\alpha}]\Big|\mathcal{F}_{t}^{\alpha}\Big)\Big]
=E\Big[E[x(t)|\mathcal{F}_{t}^{\alpha}]E[y(t)|\mathcal{F}_{t}^{\alpha}]\Big].
\end{aligned}
\end{equation*}
Similarly,
\begin{equation*}
\begin{aligned}
E\Big[E[x(t)|\mathcal{F}_{t}^{\alpha}]y(t)\Big]
=E\Big[E[x(t)|\mathcal{F}_{t}^{\alpha}]E[y(t)|\mathcal{F}_{t}^{\alpha}]\Big].
\end{aligned}
\end{equation*}
Consequently, the desired conclusion follows.
\end{proof}
\begin{lemma}\label{Lemma filtering}
For any $\mathcal{F}_{t}$-adapted and square-integrable process $x(\cdot)$,
we have
\begin{equation*}
\begin{aligned}
E\bigg[\int_{0}^{t}x(s)ds\bigg|\mathcal{F}_{t}^{\alpha}\bigg]
=\int_{0}^{t}E[x(s)|\mathcal{F}_{s}^{\alpha}]ds,
\end{aligned}
\end{equation*}
and
\begin{equation*}
\begin{aligned}
E\bigg[\int_{0}^{t}x(s)dW(s)\bigg|\mathcal{F}_{t}^{\alpha}\bigg]=0.
\end{aligned}
\end{equation*}
\end{lemma}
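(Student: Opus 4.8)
The plan is to prove the two displayed identities separately, each time exploiting the assumed independence of $W(\cdot)$ and $\alpha(\cdot)$ as the only structural input; notably, the Markov property of $\alpha(\cdot)$ plays no role. Throughout I would write $\mathcal{F}_s^{W}=\sigma\{W(r):0\le r\le s\}$, so that $\mathcal{F}_s=\mathcal{F}_s^{W}\vee\mathcal{F}_s^{\alpha}$ and, by independence, $\mathcal{F}_s^{W}$ is independent of $\mathcal{F}_t^{\alpha}$ for all $s,t$. Both statements first reduce to the case of a bounded process by the usual truncation and $L^2$-approximation; after that, the stochastic-integral identity is handled via simple integrands, while the Lebesgue-integral identity is handled via a conditional Fubini theorem.

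For the second identity, I would first take $x(\cdot)$ to be a simple $\mathcal{F}_t$-adapted integrand $x(s)=\sum_k\xi_k\mathbf{1}_{(t_k,t_{k+1}]}(s)$ with each $\xi_k$ bounded and $\mathcal{F}_{t_k}$-measurable, so that $\int_0^t x(s)\,dW(s)=\sum_k\xi_k\big(W(t_{k+1})-W(t_k)\big)$, and it suffices to show each summand has vanishing conditional expectation given $\mathcal{F}_t^{\alpha}$. Conditioning first on the larger $\sigma$-field $\mathcal{F}_{t_k}\vee\mathcal{F}_t^{\alpha}=\mathcal{F}_{t_k}^{W}\vee\mathcal{F}_t^{\alpha}$ (tower property) and pulling out the $\mathcal{F}_{t_k}$-measurable factor $\xi_k$, the crux is the identity
\begin{equation*}
E\big[W(t_{k+1})-W(t_k)\,\big|\,\mathcal{F}_{t_k}^{W}\vee\mathcal{F}_t^{\alpha}\big]=E\big[W(t_{k+1})-W(t_k)\big]=0,
\end{equation*}
which holds because the increment $W(t_{k+1})-W(t_k)$ is independent of $\mathcal{F}_{t_k}^{W}$ (independent increments of $W$) and of $\mathcal{F}_t^{\alpha}$ (since $W\perp\alpha$), hence of their join. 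Summing over $k$ gives the claim for simple integrands, and a standard $L^2$-approximation, using the It\^o isometry together with the $L^2$-contractivity of conditional expectation, extends it to every $x(\cdot)\in\mathcal{L}_{\mathcal{F}}^{2}(R^{n})$.

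For the first identity, I would invoke the conditional Fubini theorem to interchange the fixed conditional expectation $E[\,\cdot\,|\mathcal{F}_t^{\alpha}]$ with the time integral, reducing the claim to the pointwise identity
\begin{equation*}
E\big[x(s)\,\big|\,\mathcal{F}_t^{\alpha}\big]=E\big[x(s)\,\big|\,\mathcal{F}_s^{\alpha}\big],\qquad 0\le s\le t.
\end{equation*}
To establish this it is enough to test against generators of $\mathcal{F}_t^{\alpha}$ of the product form $\eta_1\eta_2$, with $\eta_1$ bounded and $\mathcal{F}_s^{\alpha}$-measurable and $\eta_2$ bounded and $\sigma\{\alpha(r):s\le r\le t\}$-measurable. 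Writing a bounded $\mathcal{F}_s$-measurable representative of $x(s)$ as $g(W|_{[0,s]},\alpha|_{[0,s]})$ and integrating out $W|_{[0,s]}$, which is legitimate because $W\perp\alpha$ makes $W|_{[0,s]}$ independent of the entire chain path, shows that $E[x(s)\eta_1\eta_2]=E\big[E[x(s)|\mathcal{F}_s^{\alpha}]\,\eta_1\eta_2\big]$; this is precisely the defining property characterizing the conditional expectation on the right. Substituting back yields $\int_0^t E[x(s)|\mathcal{F}_s^{\alpha}]\,ds$, as desired. The main obstacle is not any single computation but the careful bookkeeping of this conditional-independence step: one must verify that conditioning a past, $\mathcal{F}_s$-measurable quantity on the full chain filtration $\mathcal{F}_t^{\alpha}$ collapses to conditioning on $\mathcal{F}_s^{\alpha}$, and that integrating out the Brownian part is justified by $W\perp\alpha$ alone rather than by any Markov or martingale property of $\alpha$. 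Once that point and the two routine approximation arguments are in place, both identities follow.
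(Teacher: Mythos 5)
Your proof is correct, and for the stochastic-integral identity it is essentially the paper's own argument: reduce to simple adapted integrands, pull out the $\mathcal{F}_{t_k}$-measurable factor, use that the Brownian increment is independent of $\mathcal{F}_{t_k}^{W}\vee\mathcal{F}_t^{\alpha}$ (the paper phrases this as independence from $\mathcal{F}_t^{\alpha}\vee\sigma(x_m)$), and pass to the limit --- the paper closes with a uniform-integrability argument where you use the It\^{o} isometry plus $L^2$-contractivity of conditional expectation, a cosmetic difference. The genuine divergence is in the first identity. The paper disposes of it in one line: conditional Fubini, followed by the assertion that $E[x(s)|\mathcal{F}_t^{\alpha}]=E[x(s)|\mathcal{F}_s^{\alpha}]$ follows ``from the Markov property of $\alpha(\cdot)$ and the independence of $W(\cdot)$ and $\alpha(\cdot)$.'' You instead \emph{prove} this exchange, by testing against the $\pi$-system of products $\eta_1\eta_2$ with $\eta_1$ being $\mathcal{F}_s^{\alpha}$-measurable and $\eta_2$ measurable with respect to $\sigma\{\alpha(r):s\le r\le t\}$, and integrating out the Brownian path via the freezing lemma; your observation that the Markov property of $\alpha$ is dispensable is correct --- independence of $W$ and $\alpha$ alone yields the conditional independence of $\mathcal{F}_s$ and $\sigma(\alpha)$ given $\mathcal{F}_s^{\alpha}$, which is exactly what the exchange requires. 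What your route buys is a self-contained justification of a step the paper leaves implicit, and a slightly more general statement (valid for any switching process independent of $W$, Markov or not). One small point to tighten: when you conclude that the increment $W(t_{k+1})-W(t_k)$ is independent ``of their join'' $\mathcal{F}_{t_k}^{W}\vee\mathcal{F}_t^{\alpha}$, note that independence from two $\sigma$-fields separately does not in general give independence from their join; here it does because the increment and $\mathcal{F}_{t_k}^{W}$ both live inside $\sigma(W)$, which is independent of $\sigma(\alpha)$ wholesale, so the three $\sigma$-fields are jointly independent --- worth saying explicitly, though the paper's proof is equally terse at this point.
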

\begin{proof}
For the first equation, from the Markov property of $\alpha(\cdot)$
and the independence of $W(\cdot)$ and $\alpha(\cdot)$,
it follows that
\begin{equation*}
\begin{aligned}
E\bigg[\int_{0}^{t}x(s)ds\bigg|\mathcal{F}_{t}^{\alpha}\bigg]
=\int_{0}^{t}E[x(s)|\mathcal{F}_{t}^{\alpha}]ds
=\int_{0}^{t}E[x(s)|\mathcal{F}_{s}^{\alpha}]ds.
\end{aligned}
\end{equation*}
Now we proceed to prove the second equation. We first suppose $x(\cdot)$ is simple, namely
\begin{equation*}
\begin{aligned}
x(s)=\sum_{m\geq1}x_{m}1_{[t_{m},t_{m+1})}(s),
\end{aligned}
\end{equation*}
where, for each $m\geq1$, $x_{m}$ is an $\mathcal{F}_{t_{m}}$-measurable random variable.
As $W(t_{m+1})-W(t_{m})$ is independent of
$\mathcal{F}_{t}^{\alpha}\vee\sigma(x_{m})\doteq\sigma(\mathcal{F}_{t}^{\alpha}\cup\sigma(x_{m}))$,
we have
\begin{equation*}
\begin{aligned}
E\bigg[\int_{0}^{t}x(s)dW(s)\bigg|\mathcal{F}_{t}^{\alpha}\bigg]
=&\sum_{m\geq1}E\bigg[x_{m}[W(t_{m+1})-W(t_{m})]\bigg|\mathcal{F}_{t}^{\alpha}\bigg]\\
=&\sum_{m\geq1}E\bigg[x_{m}E\bigg(W(t_{m+1})-W(t_{m})
\bigg|\mathcal{F}_{t}^{\alpha}\vee\sigma(x_{m})\bigg)\bigg|\mathcal{F}_{t}^{\alpha}\bigg]=0.
\end{aligned}
\end{equation*}
For general $x(\cdot)$, we can approximate $x(\cdot)$ by a sequence of simple processes
$\{x_{n}(\cdot):n\geq1\}$ such that $|x_{n}(s)|\leq|x(s)|$,
a.s., for each $n\geq1$ and all $s\leq t$. Note that
\begin{equation*}
\begin{aligned}
E\bigg[\bigg|\int_{0}^{t}x_{n}(s)dW(s)\bigg|^{2}\bigg]
=E\bigg[\int_{0}^{t}|x_{n}(s)|^{2}ds\bigg]
\leq E\bigg[\int_{0}^{t}|x(s)|^{2}ds\bigg]<\infty,
\end{aligned}
\end{equation*}
which implies that $\{\int_{0}^{t}x_{n}(s)dW(s):n\geq1\}$ is uniformly integrable. Therefore,
\begin{equation*}
\begin{aligned}
E\bigg[\int_{0}^{t}x(s)dW(s)\bigg|\mathcal{F}_{t}^{\alpha}\bigg]
=\lim_{n\rightarrow\infty}
E\bigg[\int_{0}^{t}x_{n}(s)dW(s)\bigg|\mathcal{F}_{t}^{\alpha}\bigg]
=0.
\end{aligned}
\end{equation*}
This completes the proof.
\end{proof}

\section{The problem for the follower}\label{follower problem}

In this section, we deal with the problem for the follower.
For convenience, we denote
\begin{equation*}
\begin{aligned}
\widehat{\phi}(t)=E[\phi(t)|\mathcal{F}_{t}^{\alpha}],
\end{aligned}
\end{equation*}
for a process $\phi(\cdot)$. We will apply the SMP obtained by Nguyen et al.
\cite[Theorem 3.7]{NNY2020} to solve the follower's problem. Besides the
open-loop optimal control, we would like further to find its state feedback
representation. We make the following assumption:

(A1) $Q_{1}(i)\geq0$, $\widehat{Q}_{1}(i)\geq0$, $N_{1}(i)>0$,
$G_{1}(i)\geq0$, $\widehat{G}_{1}(i)\geq0$, $i\in\mathcal{M}$.
\begin{lemma}
Let Assumption (A1) hold. For any given $u_{2}(\cdot)\in\mathcal{U}_{2}$
for the leader, let $u_{1}^{*}(\cdot)$ be an optimal control for the follower,
then $u_{1}^{*}(\cdot)$ should have the following form:
\begin{equation}\label{follower optimal control}
\begin{aligned}
u_{1}^{*}(t)=-\widetilde{N}_{1}^{-1}(t,\alpha(t))
\Big[S_{1}(t,\alpha(t))x(t)+\widehat{S}_{1}(t,\alpha(t))\widehat{x}(t)+\Phi(t)\Big],
\end{aligned}
\end{equation}
where, for notational simplicity, we denote
\begin{equation*}
\begin{aligned}
&\widetilde{N}_{1}(t,i)=N_{1}(i)+D_{1}^{\top}(i)P_{1}(t,i)D_{1}(i),\\
&S_{1}(t,i)=B_{1}^{\top}(i)P_{1}(t,i)+D_{1}^{\top}(i)P_{1}(t,i)C(i),\quad
\widehat{S}_{1}(t,i)=B_{1}^{\top}(i)\widehat{P}_{1}(t,i)+D_{1}^{\top}(i)P_{1}(t,i)\widehat{C}(i),\\
&\Phi(t)=B_{1}^{\top}(\alpha(t))\varphi(t)+D_{1}^{\top}(\alpha(t))\theta(t)
+D_{1}^{\top}(\alpha(t))P_{1}(t,\alpha(t))D_{2}(\alpha(t))u_{2}(t),\quad i\in\mathcal{M},
\end{aligned}
\end{equation*}
and $P_{1}(\cdot,i)$ and $\widehat{P}_{1}(\cdot,i)$, $i\in\mathcal{M}$, are the solutions
of Riccati equations (\ref{follower 1}) and (\ref{follower 2}), respectively, and
$(\varphi(\cdot),\theta(\cdot),\eta(\cdot))\in \mathcal{S}_{\mathcal{F}}^{2}(R^{n})
\times \mathcal{L}_{\mathcal{F}}^{2}(R^{n})\times \mathcal{K}_{\mathcal{F}}^{2}(R^{n})$
is the solution of BSDE (\ref{follower 3}).
\end{lemma}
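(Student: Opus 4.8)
The plan is to derive the stated form in two stages, exactly as announced in the introduction: first a stochastic maximum principle to characterise optimality, then the four-step scheme to convert that characterisation into feedback form. For the first stage I would fix $u_2(\cdot)\in\mathcal{U}_2$ and apply Nguyen et al.\ \cite[Theorem 3.7]{NNY2020} to the follower's problem with Hamiltonian
\[ H_1=\langle p,\,Ax+\widehat{A}\widehat{x}+B_1u_1+B_2u_2\rangle+\langle q,\,Cx+\widehat{C}\widehat{x}+D_1u_1+D_2u_2\rangle+\frac{1}{2}\langle Q_1x,x\rangle+\frac{1}{2}\langle\widehat{Q}_1\widehat{x},\widehat{x}\rangle+\frac{1}{2}\langle N_1u_1,u_1\rangle, \]
all coefficients being evaluated at $\alpha(t)$. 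The optimality of $u_1^*(\cdot)$ then yields a first-order adjoint equation, namely a conditional mean-field BSDE with regime switching for $(p,q,\{r_{ij}\})\in\mathcal{S}_{\mathcal{F}}^{2}(R^{n})\times\mathcal{L}_{\mathcal{F}}^{2}(R^{n})\times\mathcal{K}_{\mathcal{F}}^{2}(R^{n})$, whose drift carries $A^\top p+\widehat{A}^\top\widehat{p}+C^\top q+\widehat{C}^\top\widehat{q}+Q_1x+\widehat{Q}_1\widehat{x}$ and whose terminal value is $G_1(\alpha(T))x(T)+\widehat{G}_1(\alpha(T))\widehat{x}(T)$, together with the pointwise stationarity condition $N_1u_1^*+B_1^\top p+D_1^\top q=0$. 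Since the controls are fully $\mathcal{F}_t$-adapted and enter no conditional-mean term, the stationarity is genuinely pointwise, and Assumption (A1) (in particular $N_1>0$) makes $J_1$ convex in $u_1$, so this condition characterises the optimum; for the present implication only its necessity is used.

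For the second stage I would decouple the forward-backward system by positing the (anticipating) ansatz
\[ p(t)=P_1(t,\alpha(t))\,x(t)+\widehat{P}_1(t,\alpha(t))\,\widehat{x}(t)+\varphi(t), \]
with deterministic regime-indexed symmetric matrices $P_1,\widehat{P}_1$ normalised by $P_1(T,i)=G_1(i)$, $\widehat{P}_1(T,i)=\widehat{G}_1(i)$ and an adapted remainder $(\varphi,\theta,\eta)$ satisfying $\varphi(T)=0$. A key preliminary is the dynamics of the conditional mean: taking $E[\cdot\,|\mathcal{F}_t^\alpha]$ in (\ref{system}) and invoking the filtering identities of Lemma \ref{Lemma filtering}, the martingale part drops out and $\widehat{x}$ is seen to be continuous with
\[ d\widehat{x}(t)=\big[(A(\alpha(t))+\widehat{A}(\alpha(t)))\widehat{x}(t)+B_1(\alpha(t))\widehat{u}_1(t)+B_2(\alpha(t))\widehat{u}_2(t)\big]\,dt, \]
carrying no $dW(t)$ term. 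Applying the generalised It\^o formula for switching diffusions to the ansatz, so that the jumps of the chain produce both the $\lambda_{ij}$-weighted drift and the martingale increments $dM_{ij}(t)$, I would read off the diffusion coefficient $q(t)=P_1\big(Cx+\widehat{C}\widehat{x}+D_1u_1+D_2u_2\big)+\theta(t)$ and identify each $r_{ij}$ with the regime-jump $(P_1(t,j)-P_1(t,i))x+(\widehat{P}_1(t,j)-\widehat{P}_1(t,i))\widehat{x}+\eta_{ij}$.

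Substituting this $q$ into the stationarity condition and collecting the coefficient of $u_1^*$ produces $(N_1+D_1^\top P_1D_1)u_1^*=-(S_1x+\widehat{S}_1\widehat{x}+\Phi)$; because (A1) keeps $P_1\ge0$ along the Riccati flow while $N_1>0$, the matrix $\widetilde{N}_1=N_1+D_1^\top P_1D_1$ is invertible, and (\ref{follower optimal control}) follows at once. It then remains to match the $dt$ coefficients of It\^o's formula against the adjoint drift after the feedback substitution for $u_1^*$; separating the two linear directions $x$ and $\widehat{x}$ by taking $E[\cdot\,|\mathcal{F}_t^\alpha]$ and using Lemma \ref{Lemma conditional E} isolates the Riccati equation (\ref{follower 1}) for $P_1$ from the pure $x$-terms, the Riccati equation (\ref{follower 2}) for $\widehat{P}_1$ from the conditional-mean terms, and the residual inhomogeneous part as the auxiliary BSDE (\ref{follower 3}) for $(\varphi,\theta,\eta)$.

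The step I expect to be the main obstacle is precisely this final matching. One must simultaneously track three kinds of increments ($dt$, $dW(t)$, $dM_{ij}(t)$), handle the regime-dependence of the quadratic ansatz through the switching It\^o formula, and, crucially, separate the $x$- and $\widehat{x}$-linear parts cleanly, all while the already-substituted feedback for $u_1^*$ reinserts $\widetilde{N}_1^{-1}$ into both $q$ and the drift, which is what endows (\ref{follower 1}) and (\ref{follower 2}) with their quadratic $\widetilde{N}_1^{-1}$-type terms. Keeping the chain jumps, the conditional expectations, and the eliminated control mutually consistent is where the delicate bookkeeping lies.
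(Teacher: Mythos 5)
Your proposal follows essentially the same route as the paper's own proof: invoking the SMP of Nguyen et al.\ to obtain the adjoint equation and the stationarity condition $N_1u_1^*+B_1^\top p+D_1^\top q=0$, positing the affine ansatz $p=P_1x+\widehat{P}_1\widehat{x}+\varphi$, using Lemma \ref{Lemma filtering} to get the drift-only dynamics of $\widehat{x}$, reading off $q$ from the $dW$ coefficients via the switching It\^{o} formula, and then matching $dt$ coefficients in the directions $x$, $\widehat{x}$, and the inhomogeneous remainder to produce (\ref{follower 1}), (\ref{follower 2}), and (\ref{follower 3}). The only cosmetic differences are that you justify invertibility of $\widetilde{N}_1$ via positivity of $P_1$ up front (the paper defers this to the solvability discussion citing Yong's result) and you suggest conditioning on $\mathcal{F}_t^\alpha$ to separate the $x$- and $\widehat{x}$-terms where the paper simply equates coefficients; neither changes the substance.
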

\begin{proof}
From \cite[Theorem 3.7]{NNY2020}, the adjoint equation for the follower is given by
\begin{equation}\label{follower adjoint equation}
\left\{
\begin{aligned}
dp(t)=&-\Big[A^{\top}(\alpha(t))p(t)+\widehat{A}^{\top}(\alpha(t))\widehat{p}(t)
+C^{\top}(\alpha(t))q(t)+\widehat{C}^{\top}(\alpha(t))\widehat{q}(t)\\
&+Q_{1}(\alpha(t))x(t)+\widehat{Q}_{1}(\alpha(t))\widehat{x}(t)\Big]dt
+q(t)dW(t)+r(t)\bullet dM(t),\\
p(T)=&G_{1}(\alpha(T))x(T)+\widehat{G}_{1}(\alpha(T))\widehat{x}(T),
\end{aligned}
\right.
\end{equation}
which, from \cite[Theorem 3.4]{NNY2020}, admits a unique solution
$(p(\cdot),q(\cdot),r(\cdot))\in\mathcal{S}_{\mathcal{F}}^{2}(R^{n})
\times \mathcal{L}_{\mathcal{F}}^{2}(R^{n})\times \mathcal{K}_{\mathcal{F}}^{2}(R^{n})$,
and an optimal control $u_{1}^{*}(\cdot)$ for the follower should satisfy
\begin{equation}\label{follower open loop}
\begin{aligned}
N_{1}(\alpha(t))u_{1}^{*}(t)+B_{1}^{\top}(\alpha(t))p(t)
+D_{1}^{\top}(\alpha(t))q(t)=0.
\end{aligned}
\end{equation}
Inspired by the terminal condition of the adjoint equation
(\ref{follower adjoint equation}), it is natural to guess
\begin{equation}\label{follower FSS}
\begin{aligned}
p(t)=P_{1}(t,\alpha(t))x(t)
+\widehat{P}_{1}(t,\alpha(t))\widehat{x}(t)+\varphi(t),
\end{aligned}
\end{equation}
for some $R^{n\times n}$-valued deterministic, differentiable, and symmetric
functions $P_{1}(t,i)$ and  $\widehat{P}_{1}(t,i)$,\  $i\in \mathcal{M}$, 	
and an $R^{n}$-valued $\mathcal{F}_{t}$-adapted process $\varphi(t)$ with
\begin{equation*}
\begin{aligned}
d\varphi(t)=\gamma(t)dt+\theta(t)dW(t)+\eta(t)\bullet dM(t).
\end{aligned}
\end{equation*}
Then,
\begin{equation}\label{follower p hat}
\begin{aligned}
\widehat{p}(t)=\Big(P_{1}(t,\alpha(t))
+\widehat{P}_{1}(t,\alpha(t))\Big)\widehat{x}(t)
+\widehat{\varphi}(t).
\end{aligned}
\end{equation}
From Lemma \ref{Lemma filtering}, we have
\begin{equation*}
\begin{aligned}
d\widehat{x}(t)=&\Big[\Big(A(\alpha(t))+\widehat{A}(\alpha(t))\Big)\widehat{x}(t)
+B_{1}(\alpha(t))\widehat{u}_{1}(t)+B_{2}(\alpha(t))\widehat{u}_{2}(t)\Big]dt.
\end{aligned}
\end{equation*}
In the rest of this paper, the arguments $t$ and $\alpha(t)$ will be dropped to save space,
if needed and when no confusion arises.	Applying It\^{o}'s formula for Markov-modulated
processes (see Zhou and Yin \cite[Lemma 3.1]{ZhouYin2003}) to (\ref{follower FSS}), we obtain
\begin{equation}\label{follower dp}
\begin{aligned}
dp=&\bigg(\dot{P}_{1}+\sum_{j\in\mathcal{M}}\lambda_{\alpha(t),j}[P_{1}(t,j)-P_{1}(t,\alpha(t))]\bigg)xdt
+\sum_{i,j\in\mathcal{M}}[P_{1}(t,j)-P_{1}(t,i)]xdM_{ij}\\
&+P_{1}[Ax+\widehat{A}\widehat{x}+B_{1}u_{1}+B_{2}u_{2}]dt+P_{1}[Cx+\widehat{C}\widehat{x}+D_{1}u_{1}+D_{2}u_{2}]dW\\
&+\bigg(\dot{\widehat{P}}_{1}
+\sum_{j\in\mathcal{M}}\lambda_{\alpha(t),j}[\widehat{P}_{1}(t,j)-\widehat{P}_{1}(t,\alpha(t))]\bigg)\widehat{x}dt
+\sum_{i,j\in\mathcal{M}}[\widehat{P}_{1}(t,j)-\widehat{P}_{1}(t,i)]\widehat{x}dM_{ij}\\
&+\widehat{P}_{1}\Big[(A+\widehat{A})\widehat{x}+B_{1}\widehat{u}_{1}+B_{2}\widehat{u}_{2}\Big]dt
+\gamma dt+\theta dW+\eta\bullet dM.
\end{aligned}
\end{equation}
Comparing the coefficients of $dW$ parts in (\ref{follower adjoint equation})
and (\ref{follower dp}), it follows that
\begin{equation}\label{follower q}
\begin{aligned}
q=P_{1}\Big[Cx+\widehat{C}\widehat{x}+D_{1}u_{1}+D_{2}u_{2}\Big]+\theta,
\end{aligned}
\end{equation}
and then,
\begin{equation}\label{follower q hat}
\begin{aligned}
\widehat{q}=P_{1}\Big[(C+\widehat{C})\widehat{x}+D_{1}\widehat{u}_{1}+D_{2}\widehat{u}_{2}\Big]+\widehat{\theta}.
\end{aligned}
\end{equation}
Inserting (\ref{follower FSS}) and (\ref{follower q}) into (\ref{follower open loop}) yields
\begin{equation*}
\begin{aligned}
0=&\Big(N+D_{1}^{\top}P_{1}D_{1}\Big)u_{1}^{*}
+\Big(B_{1}^{\top}P_{1}+D_{1}^{\top}P_{1}C\Big)x
+\Big(B_{1}^{\top}\widehat{P}_{1}+D_{1}^{\top}P_{1}\widehat{C}\Big)\widehat{x}\\
&+B_{1}^{\top}\varphi+D_{1}^{\top}\theta+D_{1}^{\top}P_{1}D_{2}u_{2},
\end{aligned}
\end{equation*}
i.e., $u_{1}^{*}=-\widetilde{N}_{1}^{-1}[S_{1}x+\widehat{S}_{1}\widehat{x}+\Phi]$,
provided $\widetilde{N}_{1}$ is invertible. So we have (\ref{follower optimal control}). Also,
\begin{equation}\label{follower optimal control hat}
\begin{aligned}
\widehat{u}_{1}^{*}=-\widetilde{N}_{1}^{-1}\Big[(S_{1}+\widehat{S}_{1})\widehat{x}+\widehat{\Phi}\Big].
\end{aligned}
\end{equation}
On the one hand, substituting (\ref{follower FSS}), (\ref{follower p hat}),
(\ref{follower q}), (\ref{follower q hat}), and (\ref{follower optimal control}),
(\ref{follower optimal control hat}) into (\ref{follower adjoint equation}),
we have
\begin{equation}\label{follower comparison 1}
\begin{aligned}
dp=&-\Big[\Big(A^{\top}P_{1}+C^{\top}P_{1}C-C^{\top}P_{1}D_{1}\widetilde{N}_{1}^{-1}S_{1}+Q_{1}\Big)x\\
&+\Big(\widehat{A}^{\top}P_{1}+(A+\widehat{A})^{\top}\widehat{P}_{1}
+C^{\top}P_{1}\widehat{C}+\widehat{C}^{\top}P_{1}C+\widehat{C}^{\top}P_{1}\widehat{C}\\
&-C^{\top}P_{1}D_{1}\widetilde{N}_{1}^{-1}\widehat{S}_{1}
-\widehat{C}^{\top}P_{1}D_{1}\widetilde{N}_{1}^{-1}(S_{1}+\widehat{S}_{1})+\widehat{Q}_{1}\Big)\widehat{x}\\
&+\Big(A-B_{1}\widetilde{N}_{1}^{-1}D_{1}^{\top}P_{1}C\Big)^{\top}\varphi
+\Big(\widehat{A}-B_{1}\widetilde{N}_{1}^{-1}D_{1}^{\top}P_{1}\widehat{C}\Big)^{\top}\widehat{\varphi}\\
&+\Big(C-D_{1}\widetilde{N}_{1}^{-1}D_{1}^{\top}P_{1}C\Big)^{\top}\theta
+\Big(\widehat{C}-D_{1}\widetilde{N}_{1}^{-1}D_{1}^{\top}P_{1}\widehat{C}\Big)^{\top}\widehat{\theta}\\
&+\Big(C^{\top}P_{1}D_{2}-C^{\top}P_{1}D_{1}\widetilde{N}_{1}^{-1}D_{1}^{\top}P_{1}D_{2}\Big)u_{2}\\
&+\Big(\widehat{C}^{\top}P_{1}D_{2}
-\widehat{C}^{\top}P_{1}D_{1}\widetilde{N}_{1}^{-1}D_{1}^{\top}P_{1}D_{2}\Big)\widehat{u}_{2}\Big]dt\\
&+qdW+r\bullet dM.
\end{aligned}
\end{equation}
On the other hand, substituting (\ref{follower optimal control})
and \eqref{follower optimal control hat} into (\ref{follower dp}),
we have
\begin{equation}\label{follower comparison 2}
\begin{aligned}
dp=&\Big[\Big(\dot{P}_1+P_{1}A-P_{1}B_{1}\widetilde{N}_{1}^{-1}S_{1}
+\sum_{j\in\mathcal{M}}\lambda_{\alpha(t),j}[P_{1}(t,j)-P_{1}(t,\alpha(t))]\Big)x\\
&+\Big(\dot{\widehat{P}}_1+P_{1}\widehat{A}+\widehat{P}_{1}(A+\widehat{A})
-P_{1}B_{1}\widetilde{N}_{1}^{-1}\widehat{S}_{1}
-\widehat{P}_{1}B_{1}\widetilde{N}_{1}^{-1}(S_{1}+\widehat{S}_{1})\\
&+\sum_{j\in\mathcal{M}}\lambda_{\alpha(t),j}[\widehat{P}_{1}(t,j)-\widehat{P}_{1}(t,\alpha(t))]\Big)\widehat{x}\\
&+\gamma-P_{1}B_{1}\widetilde{N}_{1}^{-1}B_{1}^{\top}\varphi
-\widehat{P}_{1}B_{1}\widetilde{N}_{1}^{-1}B_{1}^{\top}\widehat{\varphi}
-P_{1}B_{1}\widetilde{N}_{1}^{-1}D_{1}^{\top}\theta
-\widehat{P}_{1}B_{1}\widetilde{N}_{1}^{-1}D_{1}^{\top}\widehat{\theta}\\
&+\Big(P_{1}B_{2}-P_{1}B_{1}\widetilde{N}_{1}^{-1}D_{1}^{\top}P_{1}D_{2}\Big)u_{2}
+\Big(\widehat{P}_{1}B_{2}-\widehat{P}_{1}B_{1}\widetilde{N}_{1}^{-1}D_{1}^{\top}P_{1}D_{2}\Big)\widehat{u}_{2}\Big]dt\\
&+\Big\{\cdots\Big\}dW+\Big\{\cdots\Big\}\bullet dM.
\end{aligned}
\end{equation}
By equalizing the coefficients of $x$ and $\widehat{x}$
as well as the non-homogeneous terms in the $dt$ parts of
(\ref{follower comparison 1}) and (\ref{follower comparison 2}),
we obtain two Riccati equations:
\begin{equation}\label{follower 1}
\left\{
\begin{aligned}
\dot{P}_{1}(t,i)
=&-\Big[P_{1}(t,i)A(i)+A^{\top}(i)P_{1}(t,i)
+C^{\top}(i)P_{1}(t,i)C(i)+Q_{1}(i)\\
&-S_{1}^{\top}(t,i)\widetilde{N}_{1}^{-1}(t,i)S_{1}(t,i)
+\sum_{j\in\mathcal{M}}\lambda_{ij}[P_{1}(t,j)-P_{1}(t,i)]\Big],\\
P_{1}(T,i)=&G_{1}(i),\quad i\in\mathcal{M},
\end{aligned}
\right.
\end{equation}
and
\begin{equation}\label{follower 2}
\left\{
\begin{aligned}
\dot{\widehat{P}}_{1}(t,i)
=&-\Big[\widehat{P}_{1}(t,i)(A(i)+\widehat{A}(i))
+(A(i)+\widehat{A}(i))^{\top}\widehat{P}_{1}(t,i)\\
&+P_{1}(t,i)\widehat{A}(i)+\widehat{A}^{\top}(i)P_{1}(t,i)
+C^{\top}(i)P_{1}(t,i)\widehat{C}(i)\\
&+\widehat{C}^{\top}(i)P_{1}(t,i)C(i)
+\widehat{C}^{\top}(i)P_{1}(t,i)\widehat{C}(i)+\widehat{Q}_{1}(i)\\
&-S_{1}^{\top}(t,i)\widetilde{N}_{1}^{-1}(t,i)\widehat{S}_{1}(t,i)
-\widehat{S}_{1}^{\top}(t,i)\widetilde{N}_{1}^{-1}(t,i)S_{1}(t,i)\\
&-\widehat{S}_{1}^{\top}(t,i)\widetilde{N}_{1}^{-1}(t,i)\widehat{S}_{1}(t,i)
+\sum_{j\in\mathcal{M}}\lambda_{ij}[\widehat{P}_{1}(t,j)-\widehat{P}_{1}(t,i)]\Big],\\
\widehat{P}_{1}(T,i)=&\widehat{G}_{1}(i),\quad i\in\mathcal{M},
\end{aligned}
\right.
\end{equation}
and an auxiliary BSDE:
\begin{equation}\label{follower 3}
\left\{
\begin{aligned}
d\varphi(t)
=&-\Big[\mathbb{A}^{\top}(t,\alpha(t))\varphi(t)+\widehat{\mathbb{A}}^{\top}(t,\alpha(t))\widehat{\varphi}(t)
+\mathbb{C}^{\top}(t,\alpha(t))\theta(t)+\widehat{\mathbb{C}}^{\top}(t,\alpha(t))\widehat{\theta}(t)\\
&+\mathbb{F}_{2}^{\top}(t,\alpha(t))u_{2}(t)+\widehat{\mathbb{F}}_{2}^{\top}(t,\alpha(t))\widehat{u}_{2}(t)\Big]dt
+\theta(t)dW(t)+\eta(t)\bullet dM(t),\\
\varphi(T)=&0,
\end{aligned}
\right.
\end{equation}
where, for simplicity of presentation, we denote
\begin{equation*}
\begin{aligned}
\mathbb{A}(t,i)=&A(i)-B_{1}(i)\widetilde{N}_{1}^{-1}(t,i)S_{1}(t,i),\quad
\widehat{\mathbb{A}}(t,i)=\widehat{A}(i)-B_{1}(i)\widetilde{N}_{1}^{-1}(t,i)\widehat{S}_{1}(t,i),\\
\mathbb{C}(t,i)=&C(i)-D_{1}(i)\widetilde{N}^{-1}(t,i)S_{1}(t,i),\quad
\widehat{\mathbb{C}}(t,i)=\widehat{C}(i)-D_{1}(i)\widetilde{N}^{-1}(t,i)\widehat{S}_{1}(t,i),\\
S_{2}(t,i)=&B_{2}^{\top}(i)P_{1}(t,i)+D_{2}^{\top}(i)P_{1}(t,i)C(i),\quad
\widehat{S}_{2}(t,i)=B_{2}^{\top}(i)\widehat{P}_{1}(t,i)+D_{2}^{\top}(i)P_{1}(t,i)\widehat{C}(i),\\
\mathbb{F}_{2}(t,i)=&S_{2}(t,i)-D_{2}^{\top}(i)P_{1}(t,i)D_{1}(i)\widetilde{N}_{1}^{-1}(t,i)S_{1}(t,i),\\
\widehat{\mathbb{F}}_{2}(t,i)=&\widehat{S}_{2}(t,i)
-D_{2}^{\top}(i)P_{1}(t,i)D_{1}(i)\widetilde{N}_{1}^{-1}(t,i)\widehat{S}_{1}(t,i),\quad i\in\mathcal{M}.
\end{aligned}
\end{equation*}
Further, let $\widetilde{P}_{1}(t,i)=P_{1}(t,i)+\widehat{P}_{1}(t,i)$,
$i\in\mathcal{M}$, then we have
\begin{equation}\label{follower 5}
\left\{
\begin{aligned}
\dot{\widetilde{P}}_{1}(t,i)
=&-\Big[\widetilde{P}_{1}(t,i)\widetilde{A}(i)+\widetilde{A}^{\top}(i)\widetilde{P}_{1}(t,i)
+\widetilde{C}^{\top}(i)P_{1}(t,i)\widetilde{C}(i)+\widetilde{Q}_{1}(i)\\
&-\widetilde{S}_{1}^{\top}(t,i)\widetilde{N}_{1}^{-1}(t,i)\widetilde{S}_{1}(t,i)
+\sum_{j\in\mathcal{M}}\lambda_{ij}[\widetilde{P}_{1}(t,j)-\widetilde{P}_{1}(t,i)]\Big],\\
\widetilde{P}_{1}(T,i)=&\widetilde{G}_{1}(i),\quad i\in\mathcal{M},
\end{aligned}
\right.
\end{equation}
where $\widetilde{\Lambda}\doteq\Lambda+\widehat{\Lambda}$ for $\Lambda=A,C,Q_{1},S_{1},G_{1}$;
so we can use (\ref{follower 5}) instead of (\ref{follower 2}).
Similar to \cite[Theorem 4.1]{Yong2013}, under Assumption (A1), (\ref{follower 1}) and (\ref{follower 5})
have unique solutions $P_{1}(\cdot,i)$ and $\widetilde{P}_{1}(\cdot,i)$, $i\in\mathcal{M}$, respectively,
which are positive definite. From \cite[Theorem 3.4]{NNY2020}, (\ref{follower 3}) also admits
a unique solution $(\varphi(\cdot),\theta(\cdot),\eta(\cdot))\in \mathcal{S}_{\mathcal{F}}^{2}(R^{n})
\times \mathcal{L}_{\mathcal{F}}^{2}(R^{n})\times \mathcal{K}_{\mathcal{F}}^{2}(R^{n})$.
\end{proof}
\begin{remark}
Note that $P_{1}$ and $\widetilde{P}_{1}$ do not depend on $u_{2}$, whereas
$(\varphi,\theta,\eta)$ does depend on $u_{2}$. Moreover, since (\ref{follower 3})
is a BSDE, the value $(\varphi(t),\theta(t),\eta(t))$ of $(\varphi,\theta,\eta)$
at time $t$ depends on $\{u_{2}(s):s\in [0,T]\}$. Then, $\Phi$ and hence $u_{1}^{*}$
defined by (\ref{follower optimal control}) depend on $\{u_{2}(s):s\in [0,T]\}$ as well,
which means $u_{1}^{*}$ is anticipating in nature. Thus, it is important to find a ``real"
state feedback representation for $u_{1}^{*}$ only in terms of $x$ and $\widehat{x}$.
\end{remark}
In the following theorem, based on the so-called \emph{completion of the squares method},
we verify the optimality of (\ref{follower optimal control}) and compute the minimal cost
for the follower.
\begin{lemma}\label{Theorem follower}
Let Assumption (A1) hold. For any given $u_{2}(\cdot)\in\mathcal{U}_{2}$ for the leader,
$u_{1}^{*}(\cdot)$ defined by (\ref{follower optimal control}) is indeed an optimal control
for the follower, and
\begin{equation*}
\begin{aligned}
J_{1}(u_{1}^{*}(\cdot),&u_{2}(\cdot))
=\frac{1}{2}\langle \widetilde{P}_{1}(0,i)x_{0},x_{0}\rangle+\langle\varphi(0),x_{0}\rangle\\
&+\frac{1}{2}E\bigg[\int_{0}^{T}\Big(-|\widetilde{N}_{1}^{-\frac{1}{2}}\Phi|^{2}
+\langle D_{2}^{\top}P_{1}D_{2}u_{2},u_{2}\rangle
+2\langle B_{2}^{\top}\varphi+D_{2}^{\top}\theta,u_{2}\rangle\Big)dt\bigg].
\end{aligned}
\end{equation*}
\end{lemma}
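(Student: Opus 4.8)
The plan is to use the completion-of-squares method with $u_2(\cdot)\in\mathcal{U}_2$ fixed but arbitrary. The key object is the auxiliary functional
$$V(t)=\langle P_1 x,x\rangle+\langle \widehat{P}_1\widehat{x},\widehat{x}\rangle+2\langle\varphi,x\rangle,$$
chosen so that the terminal conditions $P_1(T,i)=G_1(i)$, $\widehat{P}_1(T,i)=\widehat{G}_1(i)$, $\varphi(T)=0$ give $V(T)=\langle G_1 x(T),x(T)\rangle+\langle \widehat{G}_1\widehat{x}(T),\widehat{x}(T)\rangle$, while $x(0)=\widehat{x}(0)=x_0$ gives $V(0)=\langle\widetilde{P}_1(0,i)x_0,x_0\rangle+2\langle\varphi(0),x_0\rangle$. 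First I would apply It\^{o}'s formula for Markov-modulated processes (see \cite[Lemma 3.1]{ZhouYin2003}) to each of the three pieces of $V$, using the dynamics of $x$ in \eqref{system}, the dynamics of $\widehat{x}$ obtained from it via Lemma \ref{Lemma filtering}, and the BSDE \eqref{follower 3} for $\varphi$; the regime-switching terms $\sum_{j}\lambda_{ij}[\,\cdot\,]$ enter through the generator part of this formula.

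Next I would take expectations over $[0,T]$, so that the $dW$- and $dM$-martingale increments drop out, and invoke Lemma \ref{Lemma conditional E} to rewrite every mean-field cross term $E[\langle(\cdot)\widehat{x},x\rangle]$ purely in terms of $\widehat{x}$ (this is legitimate because $P_1$, $\widehat{P}_1$, $\widetilde{N}_1$, $S_1$, $\widehat{S}_1$ are all $\mathcal{F}_t^{\alpha}$-measurable). Combining $E[V(T)-V(0)]$ with the running and terminal costs in \eqref{cost functionals} produces
$$2J_1(u_1,u_2)=\langle\widetilde{P}_1(0,i)x_0,x_0\rangle+2\langle\varphi(0),x_0\rangle+E\int_0^T\mathcal{I}(t)\,dt,$$
where $\mathcal{I}(t)$ contains quadratic forms in $x$ and $\widehat{x}$, linear couplings with $\varphi$ and $\theta$, the control cost $\langle\widetilde{N}_1 u_1,u_1\rangle+2\langle S_1 x+\widehat{S}_1\widehat{x}+\Phi,u_1\rangle$, and the $u_2$-dependent terms. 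I would then complete the square in $u_1$ around $u_1^*$ from \eqref{follower optimal control}, writing the control cost as $\langle\widetilde{N}_1(u_1-u_1^*),u_1-u_1^*\rangle-\langle\widetilde{N}_1 u_1^*,u_1^*\rangle$.

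The decisive point is that the penalty $-\langle\widetilde{N}_1 u_1^*,u_1^*\rangle=-\langle\widetilde{N}_1^{-1}(S_1 x+\widehat{S}_1\widehat{x}+\Phi),S_1 x+\widehat{S}_1\widehat{x}+\Phi\rangle$ supplies exactly the quadratic terms $-S_1^{\top}\widetilde{N}_1^{-1}S_1$ and, after symmetrizing with Lemma \ref{Lemma conditional E}, $-S_1^{\top}\widetilde{N}_1^{-1}\widehat{S}_1-\widehat{S}_1^{\top}\widetilde{N}_1^{-1}S_1-\widehat{S}_1^{\top}\widetilde{N}_1^{-1}\widehat{S}_1$ that appear in the Riccati equations \eqref{follower 1} and \eqref{follower 2}/\eqref{follower 5}; consequently, substituting these equations cancels all $x$- and $\widehat{x}$-quadratic contributions (the $\dot{P}_1$ and $\dot{\widehat{P}}_1$ terms included), while the BSDE drift \eqref{follower 3} cancels all the $\varphi$-, $\theta$-, and $u_2$-coupling terms that are linear in $x$ and $\widehat{x}$. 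What remains is
$$\mathcal{I}(t)=\langle\widetilde{N}_1(u_1-u_1^*),u_1-u_1^*\rangle-|\widetilde{N}_1^{-\frac12}\Phi|^2+\langle D_2^{\top}P_1 D_2 u_2,u_2\rangle+2\langle B_2^{\top}\varphi+D_2^{\top}\theta,u_2\rangle.$$
Since Assumption (A1) yields $N_1(i)>0$ and $P_1(t,i)\geq0$, we have $\widetilde{N}_1(t,i)>0$, so the first term is nonnegative and vanishes iff $u_1=u_1^*$; this simultaneously proves optimality of $u_1^*$ and, after dividing by $2$, yields the asserted value of $J_1(u_1^*,u_2)$. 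The main obstacle is precisely this bookkeeping in the cancellation step: because of the conditional-expectation mean-field terms and the generator terms of the Markov chain, one must track carefully which contributions are absorbed by $P_1$, which by $\widetilde{P}_1$, and which by the BSDE, repeatedly using Lemma \ref{Lemma conditional E} to convert $x$-cross terms into $\widehat{x}$-quadratic terms so that the coefficients match \eqref{follower 5} exactly.
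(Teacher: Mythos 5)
Your proposal is correct, and its skeleton---It\^{o}'s formula for Markov-modulated processes applied to a quadratic functional, taking expectations to kill the $dW$ and $dM$ parts, Lemma \ref{Lemma conditional E} to symmetrize the mean-field cross terms, completion of the square in $u_{1}$, and cancellation of all state-quadratic and state-linear terms via the Riccati equations and the BSDE \eqref{follower 3}---is the same completion-of-squares argument the paper uses. The genuine difference is the choice of quadratic functional: you take $\langle P_{1}x,x\rangle+\langle \widehat{P}_{1}\widehat{x},\widehat{x}\rangle+2\langle\varphi,x\rangle$, which pairs with the Riccati equations \eqref{follower 1} and \eqref{follower 2}, whereas the paper takes the orthogonal decomposition $\langle P_{1}(x-\widehat{x}),x-\widehat{x}\rangle+\langle \widetilde{P}_{1}\widehat{x},\widehat{x}\rangle+2\langle\varphi,x\rangle$, which pairs with \eqref{follower 1} and \eqref{follower 5}. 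By Lemma \ref{Lemma conditional E} the two functionals have equal expectation at every $t$, and since $\widetilde{S}_{1}=S_{1}+\widehat{S}_{1}$ gives $S_{1}(x-\widehat{x})+\widetilde{S}_{1}\widehat{x}=S_{1}x+\widehat{S}_{1}\widehat{x}$, the completed squares coincide as well, so the two computations are equivalent and lead to the identical final identity. What the paper's split buys is bookkeeping economy: $x-\widehat{x}$ has zero conditional expectation given $\mathcal{F}_{t}^{\alpha}$, so every cross term pairing $x-\widehat{x}$ against an $\mathcal{F}_{t}^{\alpha}$-measurable quantity vanishes outright, and the positivity/solvability statements are naturally phrased for $P_{1}$ and $\widetilde{P}_{1}$ (the paper indeed solves \eqref{follower 5} in place of \eqref{follower 2}). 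What your split buys is that it avoids introducing the tilded quantities and works with \eqref{follower 1}--\eqref{follower 2} exactly as produced by the four-step-scheme derivation, at the price of more applications of Lemma \ref{Lemma conditional E} to convert terms of the form $E\langle\cdot\,x,\widehat{x}\rangle$ into $\widehat{x}$-quadratic ones. One point you should make explicit: the positivity $\widetilde{N}_{1}>0$ underlying your final step requires $P_{1}(t,i)\geq 0$, which under (A1) the paper obtains from \cite[Theorem 4.1]{Yong2013}; with that noted, your argument is complete.
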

\begin{proof}
Note that $x(0)=\widehat{x}(0)=x_{0}$, then for any $u_{1}\in\mathcal{U}_{1}$, we have
\begin{equation}\label{follower proof 1}
\begin{aligned}
&J_{1}(u_{1}(\cdot),u_{2}(\cdot))\\
=&J_{1}(u_{1}(\cdot),u_{2}(\cdot))
-\frac{1}{2}\langle P_{1}(0,i)(x(0)-\widehat{x}(0)),x(0)-\widehat{x}(0)\rangle\\
&-\frac{1}{2}\langle \widetilde{P}_{1}(0,i)\widehat{x}(0),\widehat{x}(0)\rangle
+\frac{1}{2}\langle \widetilde{P}_{1}(0,i)\widehat{x}(0),\widehat{x}(0)\rangle
-\langle\varphi(0),x(0)\rangle+\langle\varphi(0),x(0)\rangle\\
=&J_{1}(u_{1}(\cdot),u_{2}(\cdot))
+\frac{1}{2}\langle \widetilde{P}_{1}(0,i)x_{0},x_{0}\rangle
+\langle\varphi(0),x_{0}\rangle\\
&-\frac{1}{2}E\bigg[\langle P_{1}(T,\alpha(T))(x(T)-\widehat{x}(T)),x(T)-\widehat{x}(T)\rangle
-\int_{0}^{T}d\langle P_{1}(x-\widehat{x}),x-\widehat{x}\rangle\bigg]\\
&-\frac{1}{2}E\bigg[\langle \widetilde{P}_{1}(T,\alpha(T))\widehat{x}(T),\widehat{x}(T)\rangle
-\int_{0}^{T}d\langle \widetilde{P}_{1}\widehat{x},\widehat{x}\rangle\bigg]
-E\bigg[\langle\varphi(T),x(T)\rangle-\int_{0}^{T}d\langle\varphi,x\rangle\bigg]\\
=&\frac{1}{2}\langle \widetilde{P}_{1}(0,i)x_{0},x_{0}\rangle+\langle\varphi(0),x_{0}\rangle\\
&+\frac{1}{2}E\bigg[\int_{0}^{T}\Big(\langle Q_{1}(x-\widehat{x}),x-\widehat{x}\rangle
+\langle \widetilde{Q}_{1}\widehat{x},\widehat{x}\rangle
+\langle N_{1}u_{1},u_{1}\rangle\Big)dt\bigg]\\
&+\frac{1}{2}E\bigg[\int_{0}^{T}\Big(d\langle P_{1}(x-\widehat{x}),x-\widehat{x}\rangle
+d\langle \widetilde{P}_{1}\widehat{x},\widehat{x}\rangle
+2d\langle\varphi,x\rangle\Big)\bigg].
\end{aligned}
\end{equation}
On the one hand, applying It\^{o}'s formula for Markov modulated processes to $P_{1}(x-\widehat{x})$,
\begin{equation}\label{follower proof +1}
\begin{aligned}
d[P_{1}(x-\widehat{x})]
=&-[A^{\top}P_{1}+C^{\top}P_{1}C+Q_{1}-S_{1}^{\top}\widetilde{N}_{1}^{-1}S_{1}](x-\widehat{x})dt\\
&+P_{1}[B_{1}u_{1}-B_{1}\widehat{u}_{1}+B_{2}u_{2}-B_{2}\widehat{u}_{2}]dt\\
&+P_{1}[C(x-\widehat{x})+\widetilde{C}\widehat{x}+D_{1}u_{1}+D_{2}u_{2}]dW\\
&+\sum_{i,j\in\mathcal{M}}[P_{1}(t,j)-P_{1}(t,i)](x-\widehat{x})dM_{ij}.
\end{aligned}
\end{equation}
Applying It\^{o}'s formula for semi-martingales
(see Karatzas and Shreve \cite[Theorem 3.3]{KS}) to
$\langle P_{1}(x-\widehat{x}),x-\widehat{x}\rangle$
(only the $dt$ part is preserved),
\begin{equation}\label{follower proof 2}
\begin{aligned}
&d\langle P_{1}(x-\widehat{x}),x-\widehat{x}\rangle\\
=&\langle d[P_{1}(x-\widehat{x})],x-\widehat{x}\rangle
+\langle P_{1}(x-\widehat{x}),d(x-\widehat{x})\rangle
+\langle d[P_{1}(x-\widehat{x})],d(x-\widehat{x})\rangle\\
=&\langle [-C^{\top}P_{1}C-Q_{1}+S_{1}^{\top}\widetilde{N}_{1}^{-1}S_{1}](x-\widehat{x})
+P_{1}[B_{1}u_{1}-B_{1}\widehat{u}_{1}+B_{2}u_{2}-B_{2}\widehat{u}_{2}],x-\widehat{x}\rangle dt\\
&+\langle P_{1}(x-\widehat{x}),B_{1}u_{1}-B_{1}\widehat{u}_{1}+B_{2}u_{2}-B_{2}\widehat{u}_{2}\rangle dt\\
&+\langle P_{1}[C(x-\widehat{x})+\widetilde{C}\widehat{x}+D_{1}u_{1}+D_{2}u_{2}],
C(x-\widehat{x})+\widetilde{C}\widehat{x}+D_{1}u_{1}+D_{2}u_{2}\rangle dt.
\end{aligned}
\end{equation}
On the other hand,
applying It\^{o}'s formula for Markov modulated processes to $\widetilde{P}_{1}\widehat{x}$,
\begin{equation}\label{follower proof +2}
\begin{aligned}
d[\widetilde{P}_{1}\widehat{x}]
=&-[\widetilde{A}^{\top}\widetilde{P}_{1}+\widetilde{C}^{\top}P_{1}\widetilde{C}+\widetilde{Q}_{1}
-\widetilde{S}_{1}^{\top}\widetilde{N}_{1}^{-1}\widetilde{S}_{1}]\widehat{x}dt
+\widetilde{P}_{1}[B_{1}\widehat{u}_{1}+B_{2}\widehat{u}_{2}]dt\\
&+\sum_{i,j\in\mathcal{M}}[\widetilde{P}_{1}(t,j)-\widetilde{P}_{1}(t,i)]\widehat{x}dM_{ij}.
\end{aligned}
\end{equation}
Applying It\^{o}'s formula for semi-martingales to
$\langle \widetilde{P}_{1}\widehat{x},\widehat{x}\rangle$,
\begin{equation}\label{follower proof 3}
\begin{aligned}
d\langle \widetilde{P}_{1}\widehat{x},\widehat{x}\rangle
=&\langle d(\widetilde{P}_{1}\widehat{x}),\widehat{x}\rangle
+\langle \widetilde{P}_{1}\widehat{x},d\widehat{x}\rangle
+\langle d(\widetilde{P}_{1}\widehat{x}),d\widehat{x}\rangle\\
=&\langle [-\widetilde{C}^{\top}P_{1}\widetilde{C}-\widetilde{Q}_{1}
+\widetilde{S}_{1}^{\top}\widetilde{N}_{1}^{-1}\widetilde{S}_{1}]\widehat{x}
+\widetilde{P}_{1}[B_{1}\widehat{u}_{1}+B_{2}\widehat{u}_{2}],\widehat{x}\rangle dt\\
&+\langle\widetilde{P}_{1}\widehat{x},B_{1}\widehat{u}_{1}+B_{2}\widehat{u}_{2}\rangle dt.
\end{aligned}
\end{equation}
Finally, applying It\^{o}'s formula for semi-martingales to $2\langle\varphi,x\rangle$,
\begin{equation}\label{follower proof 4}
\begin{aligned}
2d\langle\varphi,x\rangle=&2(\langle d\varphi,x\rangle
+\langle\varphi,dx\rangle+\langle d\varphi,dx\rangle)\\
=&2\langle -[\mathbb{A}^{\top}\varphi+\widehat{\mathbb{A}}^{\top}\widehat{\varphi}
+\mathbb{C}^{\top}\theta+\widehat{\mathbb{C}}^{\top}\widehat{\theta}
+\mathbb{F}_{2}^{\top}u_{2}+\widehat{\mathbb{F}}_{2}^{\top}\widehat{u}_{2}],x\rangle dt\\
&+2\langle\varphi,Ax+\widehat{A}\widehat{x}+B_{1}u_{1}+B_{2}u_{2}\rangle dt\\
&+2\langle\theta,Cx+\widehat{C}\widehat{x}+D_{1}u_{1}+D_{2}u_{2}\rangle dt.
\end{aligned}
\end{equation}
We first look at the terms involving $u_{1}$ and $\widehat{u}_{1}$
in (\ref{follower proof 1})--(\ref{follower proof 4}):
\begin{equation*}
\begin{aligned}
&u_{1}^{\top}(N_{1}+D_{1}^{\top}P_{1}D_{1})u_{1}\\
&+2u_{1}^{\top}[B_{1}^{\top}P_{1}(x-\widehat{x})
+D_{1}^{\top}P_{1}(C(x-\widehat{x})+\widetilde{C}\widehat{x}+D_{2}u_{2})
+B_{1}\widetilde{P}_{1}\widehat{x}+B_{1}^{\top}\varphi+D_{1}^{\top}\theta]\\
=&|\widetilde{N}_{1}^{\frac{1}{2}}u_{1}+\widetilde{N}_{1}^{-\frac{1}{2}}[S_{1}(x-\widehat{x})+\widetilde{S}_{1}\widehat{x}+\Phi]|^{2}
-|\widetilde{N}_{1}^{-\frac{1}{2}}[S_{1}(x-\widehat{x})+\widetilde{S}_{1}\widehat{x}+\Phi]|^{2},
\end{aligned}
\end{equation*}
in which we have used Lemma \ref{Lemma conditional E} to get
\begin{equation*}
\begin{aligned}
&E\langle P_{1}B_{1}\widehat{u}_{1},x-\widehat{x}\rangle
=E\langle P_{1}B_{1}u_{1},\widehat{x}-\widehat{x}\rangle=0,\\
&E\langle \widetilde{P}_{1}B_{1}\widehat{u}_{1},\widehat{x}\rangle
=E\langle \widetilde{P}_{1}B_{1}u_{1},\widehat{x}\rangle.
\end{aligned}
\end{equation*}
For the terms involving no $u_{1}$ or $\widehat{u}_{1}$
in (\ref{follower proof 1})--(\ref{follower proof 4}):
\begin{equation*}
\begin{aligned}
&\langle S_{1}^{\top}\widetilde{N}_{1}^{-1}S_{1}(x-\widehat{x}),x-\widehat{x}\rangle
+\langle D_{2}^{\top}P_{1}D_{2}u_{2},u_{2}\rangle\\
&+2\langle B_{2}^{\top}P_{1}(x-\widehat{x}),u_{2}\rangle
+2\langle D_{2}^{\top}P_{1}[C(x-\widehat{x})+\widetilde{C}\widehat{x}],u_{2}\rangle\\
&+\langle \widetilde{S}_{1}^{\top}\widetilde{N}_{1}^{-1}\widetilde{S}_{1}\widehat{x},\widehat{x}\rangle
+2\langle B_{2}^{\top}\widetilde{P}_{1}\widehat{x},u_{2}\rangle
+2\langle B_{2}^{\top}\varphi,u_{2}\rangle+2\langle D_{2}^{\top}\theta,u_{2}\rangle\\
&+2\langle B_{1}\widetilde{N}_{1}^{-1}S_{1}x,\varphi\rangle
+2\langle B_{1}\widetilde{N}_{1}^{-1}\widehat{S}_{1}\widehat{x},\varphi\rangle
+2\langle D_{1}\widetilde{N}_{1}^{-1}S_{1}x,\theta\rangle
+2\langle D_{1}\widetilde{N}_{1}^{-1}\widehat{S}_{1}\widehat{x},\theta\rangle\\
&-2\langle[S_{2}-D_{2}^{\top}P_{1}D_{1}\widetilde{N}_{1}^{-1}S_{1}]x,u_{2}\rangle
-2\langle[\widehat{S}_{2}-D_{2}^{\top}P_{1}D_{1}\widetilde{N}_{1}^{-1}\widehat{S}_{1}]\widehat{x},u_{2}\rangle\\
=&\langle D_{2}^{\top}P_{1}D_{2}u_{2},u_{2}\rangle
+2\langle B_{2}^{\top}\varphi,u_{2}\rangle+2\langle D_{2}^{\top}\theta,u_{2}\rangle\\
&+\langle S_{1}^{\top}\widetilde{N}_{1}^{-1}S_{1}(x-\widehat{x}),x-\widehat{x}\rangle
+\langle \widetilde{S}_{1}^{\top}\widetilde{N}_{1}^{-1}\widetilde{S}_{1}\widehat{x},\widehat{x}\rangle\\
&+2\langle D_{2}^{\top}P_{1}D_{1}\widetilde{N}_{1}^{-1}S_{1}x,u_{2}\rangle
+2\langle D_{2}^{\top}P_{1}D_{1}\widetilde{N}_{1}^{-1}\widehat{S}_{1}\widehat{x},u_{2}\rangle\\
&+2\langle B_{1}\widetilde{N}_{1}^{-1}S_{1}x,\varphi\rangle
+2\langle B_{1}\widetilde{N}_{1}^{-1}\widehat{S}_{1}\widehat{x},\varphi\rangle
+2\langle D_{1}\widetilde{N}_{1}^{-1}S_{1}x,\theta\rangle
+2\langle D_{1}\widetilde{N}_{1}^{-1}\widehat{S}_{1}\widehat{x},\theta\rangle\\
=&\langle D_{2}^{\top}P_{1}D_{2}u_{2},u_{2}\rangle
+2\langle B_{2}^{\top}\varphi,u_{2}\rangle+2\langle D_{2}^{\top}\theta,u_{2}\rangle\\
&+|\widetilde{N}_{1}^{-\frac{1}{2}}[S_{1}(x-\widehat{x})+\widetilde{S}_{1}\widehat{x}+\Phi]|^{2}
-|\widetilde{N}_{1}^{-\frac{1}{2}}\Phi|^{2},
\end{aligned}
\end{equation*}
in which we have also used Lemma \ref{Lemma conditional E} to get
\begin{equation*}
\begin{aligned}
&E\langle P_{1}B_{2}\widehat{u}_{2},x-\widehat{x}\rangle
=E\langle P_{1}B_{2}u_{2},\widehat{x}-\widehat{x}\rangle=0,\\
&E\langle P_{1}C(x-\widehat{x}),\widetilde{C}\widehat{x}\rangle
=E\langle P_{1}C(\widehat{x}-\widehat{x}),\widetilde{C}x\rangle=0,\\
&E\langle \widetilde{P}_{1}B_{2}\widehat{u}_{2},\widehat{x}\rangle
=E\langle \widetilde{P}_{1}B_{2}u_{2},\widehat{x}\rangle,\\
&E\langle B_{1}\widetilde{N}_{1}^{-1}\widehat{S}_{1}x,\widehat{\varphi}\rangle
=E\langle B_{1}\widetilde{N}_{1}^{-1}\widehat{S}_{1}\widehat{x},\varphi\rangle,\\
&E\langle D_{1}\widetilde{N}_{1}^{-1}\widehat{S}_{1}x,\widehat{\theta}\rangle
=E\langle D_{1}\widetilde{N}_{1}^{-1}\widehat{S}_{1}\widehat{x},\theta\rangle,\\
&E\langle[\widehat{S}_{2}-D_{2}^{\top}P_{1}D_{1}\widetilde{N}_{1}^{-1}\widehat{S}_{1}]x,\widehat{u}_{2}\rangle
=E\langle[\widehat{S}_{2}-D_{2}^{\top}P_{1}D_{1}\widetilde{N}_{1}^{-1}\widehat{S}_{1}]\widehat{x},u_{2}\rangle.
\end{aligned}
\end{equation*}
Then, (\ref{follower proof 1}) reduces to
\begin{equation*}
\begin{aligned}
J_{1}(u_{1}(\cdot),&u_{2}(\cdot))
=\frac{1}{2}\langle \widetilde{P}_{1}(0,i)x_{0},x_{0}\rangle
+\langle\varphi(0),x_{0}\rangle\\
&+\frac{1}{2}E\bigg[\int_{0}^{T}\Big(|\widetilde{N}_{1}^{\frac{1}{2}}
(u_{1}+\widetilde{N}_{1}^{-1}[S_{1}(x-\widehat{x})+\widetilde{S}_{1}\widehat{x}+\Phi])|^{2}
-|\widetilde{N}_{1}^{-\frac{1}{2}}\Phi|^{2}\\
&+\langle D_{2}^{\top}P_{1}D_{2}u_{2},u_{2}\rangle
+2\langle B_{2}^{\top}\varphi+D_{2}^{\top}\theta,u_{2}\rangle\Big)dt\bigg].
\end{aligned}
\end{equation*}
It follows that $u_{1}^{*}$ defined by (\ref{follower optimal control})
is indeed an optimal control for the follower, and
\begin{equation*}
\begin{aligned}
J_{1}(u_{1}^{*}(\cdot),&u_{2}(\cdot))
=\frac{1}{2}\langle \widetilde{P}_{1}(0,i)x_{0},x_{0}\rangle
+\langle\varphi(0),x_{0}\rangle\\
&+\frac{1}{2}E\bigg[\int_{0}^{T}\Big(-|\widetilde{N}_{1}^{-\frac{1}{2}}\Phi|^{2}
+\langle D_{2}^{\top}P_{1}D_{2}u_{2},u_{2}\rangle
+2\langle B_{2}^{\top}\varphi+D_{2}^{\top}\theta,u_{2}\rangle\Big)dt\bigg].
\end{aligned}
\end{equation*}
The proof is completed.
\end{proof}

\section{The problem for the leader}\label{leader problem}

After the follower's problem being solved and the follower taking his optimal control
(\ref{follower optimal control}), the leader faces a state equation, which is a
conditional mean-field FBSDE with regime switching, consisting of the state equation
(\ref{system}) of the LQ problem and the auxiliary BSDE (\ref{follower 3}) of the follower:
\begin{equation}\label{leader system}
\left\{
\begin{aligned}
dx=&\Big[\mathbb{A}x+\widehat{\mathbb{A}}\widehat{x}
+\mathbb{F}_{1}\varphi+\mathbb{B}_{1}\theta+\mathbb{B}_{2}u_{2}\Big]dt
+\Big[\mathbb{C}x+\widehat{\mathbb{C}}\widehat{x}
+\mathbb{B}_{1}^{\top}\varphi+\mathbb{D}_{1}\theta+\mathbb{D}_{2}u_{2}\Big]dW,\\
d\varphi=&-\Big[\mathbb{A}^{\top}\varphi+\widehat{\mathbb{A}}^{\top}\widehat{\varphi}
+\mathbb{C}^{\top}\theta+\widehat{\mathbb{C}}^{\top}\widehat{\theta}
+\mathbb{F}_{2}^{\top}u_{2}+\widehat{\mathbb{F}}_{2}^{\top}\widehat{u}_{2}\Big]dt
+\theta dW+\eta\bullet dM,\\
x(0)=&x_{0},\quad \varphi(T)=0,
\end{aligned}
\right.
\end{equation}
where, for convenience, we denote
\begin{equation*}
\begin{aligned}
\mathbb{B}_{1}(t,i)=&-B_{1}(i)\widetilde{N}_{1}^{-1}(t,i)D_{1}^{\top}(i),\quad
\mathbb{B}_{2}(t,i)=B_{2}(i)-B_{1}(i)\widetilde{N}_{1}^{-1}(t,i)D_{1}^{\top}(i)P_{1}(t,i)D_{2}(i),\\
\mathbb{D}_{1}(t,i)=&-D_{1}(i)\widetilde{N}_{1}^{-1}(t,i)D_{1}^{\top}(i),\quad
\mathbb{D}_{2}(t,i)=D_{2}(i)-D_{1}(i)\widetilde{N}_{1}^{-1}(t,i)D_{1}^{\top}(i)P_{1}(t,i)D_{2}(i),\\
\mathbb{F}_{1}(t,i)=&-B_{1}(i)\widetilde{N}_{1}^{-1}(t,i)B_{1}^{\top}(i),\quad i\in\mathcal{M}.
\end{aligned}
\end{equation*}
Note that the FBSDE (\ref{leader system}) is \emph{decoupled} in the sense that
one can first solve the backward equation for $(\varphi,\theta,\eta)$
and then solve the forward equation for $x$,
so the unique solvability of (\ref{leader system}) is guaranteed.
The leader's problem is to find an optimal control $u_{2}^{*}(\cdot)\in\mathcal{U}_{2}$
to minimize her cost functional (\ref{cost functionals}) for $k=2$.
We will also utilize the SMP approach to solve the leader's problem.
In addition to Assumption (A1), we further make the following assumption:

(A2) $Q_{2}(i)\geq0$, $\widehat{Q}_{2}(i)\geq0$, $N_{2}(i)>0$,
$G_{2}(i)\geq0$, $\widehat{G}_{2}(i)\geq0$, $i\in\mathcal{M}$.

The adjoint equation for the leader is given by
\begin{equation}\label{leader adjoint equation}
\left\{
\begin{aligned}
dy=&-\Big[\mathbb{A}^{\top}y+\widehat{\mathbb{A}}^{\top}\widehat{y}
+\mathbb{C}^{\top}z+\widehat{\mathbb{C}}^{\top}\widehat{z}
+Q_{2}x^{*}+\widehat{Q}_{2}\widehat{x}^{*}\Big]dt+zdW+k\bullet dM,\\
d\psi=&\Big[\mathbb{A}\psi+\widehat{\mathbb{A}}\widehat{\psi}
+\mathbb{F}_{1}y+\mathbb{B}_{1}z\Big]dt
+\Big[\mathbb{C}\psi+\widehat{\mathbb{C}}\widehat{\psi}
+\mathbb{B}_{1}^{\top}y+\mathbb{D}_{1}z\Big]dW,\\
y(T)=&G_{2}(\alpha(T))x^{*}(T)+\widehat{G}_{2}(\alpha(T))\widehat{x}^{*}(T),\quad \psi(0)=0,
\end{aligned}
\right.
\end{equation}
where $(x^{*},\varphi^{*},\theta^{*},\eta^{*})$ is the corresponding solution of
(\ref{leader system}) under an optimal control $u_{2}^{*}$ for the leader.
Note that (\ref{leader adjoint equation}) is also a decoupled conditional mean-field
FBSDE with regime switching, and thereby its unique solvability is guaranteed.
Based on Yong \cite[Theorem 3.2]{Yong2002} and Nguyen et al. \cite[Theorem 3.7]{NNY2020},
one can establish the following SMP for the leader's problem.
\begin{lemma}\label{leader theorem 1}
Let Assumptions (A1) and (A2) hold. Then $u_{2}^{*}\in\mathcal{U}_{2}$ is an optimal control
for the leader if and only if the adjoint equation (\ref{leader adjoint equation}) admits
a unique solution $(y,z,k,\psi)\in\mathcal{S}_{\mathcal{F}}^{2}(R^{n})
\times\mathcal{L}_{\mathcal{F}}^{2}(R^{n})\times\mathcal{K}_{\mathcal{F}}^{2}(R^{n})
\times\mathcal{L}_{\mathcal{F}}^{2}(R^{n})$ such that
\begin{equation}\label{leader original optimal condition}
\begin{aligned}
N_{2}u_{2}^{*}+\mathbb{B}_{2}^{\top}y+\mathbb{D}_{2}^{\top}z+\mathbb{F}_{2}\psi
+\widehat{\mathbb{F}}_{2}\widehat{\psi}=0.
\end{aligned}
\end{equation}
\end{lemma}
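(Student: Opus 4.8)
The plan is to establish this maximum principle by a \emph{convex variation} argument coupled with a duality (integration-by-parts) computation between the state FBSDE and the adjoint FBSDE, merging Yong's dimensional-augmentation treatment of forward-backward systems \cite[Theorem 3.2]{Yong2002} with the conditional mean-field, regime-switching machinery of Nguyen et al. \cite[Theorem 3.7]{NNY2020}. The key structural observation is that the leader's controlled system (\ref{leader system}) is a decoupled FBSDE with forward state $x$ and backward state $(\varphi,\theta,\eta)$; correspondingly its adjoint (\ref{leader adjoint equation}) is itself an FBSDE, in which the backward triple $(y,z,k)$ is dual to the forward $x$ (with terminal condition inherited from the terminal cost of $J_{2}$) and the forward process $\psi$ is dual to the backward $\varphi$ (with initial condition $\psi(0)=0$, since $\varphi(T)=0$ carries no terminal cost). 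Unique solvability of this adjoint system is already recorded in the statement, so the substantive work is purely variational.

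First I would fix $u_{2}^{*}$, take an arbitrary $v\in\mathcal{U}_{2}$, and perturb $u_{2}^{\epsilon}=u_{2}^{*}+\epsilon(v-u_{2}^{*})$, $\epsilon\in[0,1]$. Because (\ref{leader system}) is linear, the variation of the state is exactly the solution $(x_{1},\varphi_{1},\theta_{1},\eta_{1})$ of the same FBSDE driven only by the increment $v-u_{2}^{*}$, with $x_{1}(0)=0$ and $\varphi_{1}(T)=0$. Using the symmetry of $Q_{2},\widehat{Q}_{2},G_{2},\widehat{G}_{2}$, the first variation of the cost is
\begin{equation*}
\begin{aligned}
\frac{d}{d\epsilon}\Big|_{\epsilon=0}J_{2}
=&E\bigg[\int_{0}^{T}\Big(\langle Q_{2}x^{*},x_{1}\rangle+\langle\widehat{Q}_{2}\widehat{x}^{*},\widehat{x}_{1}\rangle
+\langle N_{2}u_{2}^{*},v-u_{2}^{*}\rangle\Big)dt\\
&+\langle G_{2}x^{*}(T),x_{1}(T)\rangle+\langle\widehat{G}_{2}\widehat{x}^{*}(T),\widehat{x}_{1}(T)\rangle\bigg].
\end{aligned}
\end{equation*}

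The heart of the proof is to rewrite the state-dependent part of this derivative purely in terms of the adjoint variables and the control increment. To this end I would apply It\^o's formula for Markov-modulated processes (Zhou and Yin \cite[Lemma 3.1]{ZhouYin2003}) to $\langle y,x_{1}\rangle$ and to $\langle\psi,\varphi_{1}\rangle$, integrate over $[0,T]$, and take expectations so that the $dW$- and $dM$-martingale parts vanish. In $E\langle y,x_{1}\rangle$ the boundary contributions reproduce exactly $E[\langle G_{2}x^{*}(T),x_{1}(T)\rangle+\langle\widehat{G}_{2}\widehat{x}^{*}(T),\widehat{x}_{1}(T)\rangle]$, while in $E\langle\psi,\varphi_{1}\rangle$ both endpoints vanish since $\psi(0)=0$ and $\varphi_{1}(T)=0$. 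Throughout I would invoke Lemma \ref{Lemma conditional E} to convert the mean-field cross terms (e.g.\ $E\langle\widehat{\mathbb{A}}^{\top}\widehat{y},x_{1}\rangle=E\langle\widehat{\mathbb{A}}^{\top}\widehat{y},\widehat{x}_{1}\rangle$), which forces every $\mathbb{A},\widehat{\mathbb{A}},\mathbb{C},\widehat{\mathbb{C}}$ contribution to cancel in pairs. The decisive and most delicate point is that the residual coupling terms between $(y,z)$ and $(\varphi_{1},\theta_{1})$ arising from $\langle y,x_{1}\rangle$ coincide, after using the symmetry of $\mathbb{F}_{1},\mathbb{D}_{1}$ and the relation $\mathbb{B}_{1}^{\top}=-D_{1}\widetilde{N}_{1}^{-1}B_{1}^{\top}$, with those arising from $\langle\psi,\varphi_{1}\rangle$; hence they cancel when the two It\^o identities are added. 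What survives reduces the left-hand side above precisely to $E\int_{0}^{T}\langle\mathbb{B}_{2}^{\top}y+\mathbb{D}_{2}^{\top}z+\mathbb{F}_{2}\psi+\widehat{\mathbb{F}}_{2}\widehat{\psi},v-u_{2}^{*}\rangle dt$, so that
\begin{equation*}
\frac{d}{d\epsilon}\Big|_{\epsilon=0}J_{2}
=E\int_{0}^{T}\langle N_{2}u_{2}^{*}+\mathbb{B}_{2}^{\top}y+\mathbb{D}_{2}^{\top}z+\mathbb{F}_{2}\psi+\widehat{\mathbb{F}}_{2}\widehat{\psi},\,v-u_{2}^{*}\rangle\,dt.
\end{equation*}

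Finally I would close the equivalence via convexity. Since $(x,\varphi,\theta,\eta)$ depends affinely on $u_{2}$ and, under Assumptions (A1)--(A2), the running and terminal integrands are nonnegative quadratic forms with $N_{2}>0$, the map $u_{2}\mapsto J_{2}$ is convex; hence $u_{2}^{*}$ is optimal if and only if the directional derivative above is nonnegative for every $v\in\mathcal{U}_{2}$. Because $\mathcal{U}_{2}$ is a linear space, testing with both $\pm(v-u_{2}^{*})$ forces the derivative to vanish for all $v$, and the arbitrariness of $v$ makes this equivalent to the pointwise condition (\ref{leader original optimal condition}). I expect the main obstacle to be the bookkeeping in the duality step, specifically verifying the exact cancellation of the $(y,z)$--$(\varphi_{1},\theta_{1})$ coupling terms (where Yong's augmentation idea is genuinely used) and ensuring each mean-field cross term is correctly symmetrized by Lemma \ref{Lemma conditional E}; the regime-switching $dM$-terms are comparatively harmless here, as $x_{1}$ and $\psi$ carry no $dM$ component and therefore contribute no extra quadratic covariation.
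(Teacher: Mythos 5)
Your proposal is correct and follows essentially the same route as the paper's proof: a linear perturbation of the control, the duality computation via It\^{o}'s formula applied to $\langle y,x_{1}\rangle-\langle\psi,\varphi_{1}\rangle$ between the adjoint pair $(y,z,k,\psi)$ and the variational state (with Lemma \ref{Lemma conditional E} symmetrizing the mean-field cross terms and the symmetry of $\mathbb{F}_{1},\mathbb{D}_{1}$ producing exactly the cancellation of the coupling terms you describe), and then convexity to upgrade the stationarity condition to an equivalence. The only real difference is that you invoke convexity of $u_{2}\mapsto J_{2}$ abstractly (affine state dependence plus nonnegative quadratic forms), whereas the paper makes this concrete by introducing a second, auxiliary adjoint system $(y^{0},z^{0},k^{0},\psi^{0})$ attached to the variational state and showing by the same duality computation that the coefficient of $\lambda^{2}$ in the expansion of $J_{2}(u_{1}^{*},u_{2}^{*}+\lambda u_{2}^{0})$ is nonnegative; both ways of closing the sufficiency direction are valid.
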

\begin{proof}
Let $(x^{*},\varphi^{*},\theta^{*},\eta^{*})\in
\mathcal{L}_{\mathcal{F}}^{2}(R^{n})\times\mathcal{S}_{\mathcal{F}}^{2}(R^{n})
\times\mathcal{L}_{\mathcal{F}}^{2}(R^{n})\times\mathcal{K}_{\mathcal{F}}^{2}(R^{n})$
be the corresponding solution of (\ref{leader system}) under $u_{2}^{*}$.
For any $u_{2}^{0}\in\mathcal{U}_{2}$, we introduce the following state equation:
\begin{equation}\label{leader proof 1-1}
\left\{
\begin{aligned}
dx^{0}=&\Big[\mathbb{A}x^{0}+\widehat{\mathbb{A}}\widehat{x}^{0}
+\mathbb{F}_{1}\varphi^{0}+\mathbb{B}_{1}\theta^{0}+\mathbb{B}_{2}u_{2}^{0}\Big]dt\\
&+\Big[\mathbb{C}x^{0}+\widehat{\mathbb{C}}\widehat{x}^{0}
+\mathbb{B}_{1}^{\top}\varphi^{0}+\mathbb{D}_{1}\theta^{0}+\mathbb{D}_{2}u_{2}^{0}\Big]dW,\\
d\varphi^{0}=&-\Big[\mathbb{A}^{\top}\varphi^{0}+\widehat{\mathbb{A}}^{\top}\widehat{\varphi}^{0}
+\mathbb{C}^{\top}\theta^{0}+\widehat{\mathbb{C}}^{\top}\widehat{\theta}^{0}
+\mathbb{F}_{2}^{\top}u_{2}^{0}+\widehat{\mathbb{F}}_{2}^{\top}\widehat{u}_{2}^{0}\Big]dt\\
&+\theta^{0}dW+\eta^{0}\bullet dM,\\
x^{0}(0)=&0,\quad \varphi^{0}(T)=0,
\end{aligned}
\right.
\end{equation}
and the adjoint equation:
\begin{equation}\label{leader proof 1-2}
\left\{
\begin{aligned}
dy^{0}=&-\Big[\mathbb{A}^{\top}y^{0}+\widehat{\mathbb{A}}^{\top}\widehat{y}^{0}
+\mathbb{C}^{\top}z^{0}+\widehat{\mathbb{C}}^{\top}\widehat{z}^{0}
+Q_{2}x^{0}+\widehat{Q}_{2}\widehat{x}^{0}\Big]dt\\
&+z^{0}dW+k^{0}\bullet dM,\\
d\psi^{0}=&\Big[\mathbb{A}\psi^{0}+\widehat{\mathbb{A}}\widehat{\psi}^{0}
+\mathbb{F}_{1}y^{0}+\mathbb{B}_{1}z^{0}\Big]dt\\
&+\Big[\mathbb{C}\psi^{0}+\widehat{\mathbb{C}}\widehat{\psi}^{0}
+\mathbb{B}_{1}^{\top}y^{0}+\mathbb{D}_{1}z^{0}\Big]dW,\\
y^{0}(T)=&G_{2}(\alpha(T))x^{0}(T)+\widehat{G}_{2}(\alpha(T))\widehat{x}^{0}(T),\quad \psi^{0}(0)=0.
\end{aligned}
\right.
\end{equation}
Note that the initial condition $x^{0}(0)=0$ in (\ref{leader proof 1-1}),
which is the only difference compared with (\ref{leader system}).
Also, the FBSDEs (\ref{leader proof 1-1}) and (\ref{leader proof 1-2})
have a unique solution
$(x^{0},\varphi^{0},\theta^{0},\eta^{0},y^{0},z^{0},k^{0},\psi^{0})$
in the usual space.
		
For any $\lambda\in R$, consider $u_{2}\doteq u_{2}^{*}+\lambda u_{2}^{0}\in\mathcal{U}_{2}$
and denote $(x,\varphi,\theta,\eta)$ the corresponding solution of (\ref{leader system}).
From the linearity of the above FBSDEs, we have $x=x^{*}+\lambda x^{0}$. Then,
\begin{equation}\label{leader proof 2}
\begin{aligned}
&J_{2}(u_{1}^{*},u_{2})-J_{2}(u_{1}^{*},u_{2}^{*})\\
=&\frac{\lambda^{2}}{2}E\bigg[\int_{0}^{T}\Big(\langle Q_{2}x^{0},x^{0}\rangle
+\langle \widehat{Q}_{2}\widehat{x}^{0},\widehat{x}^{0}\rangle
+\langle N_{2}u_{2}^{0},u_{2}^{0}\rangle\Big)dt\\
&+\langle G_{2}(\alpha(T))x^{0}(T),x^{0}(T)\rangle
+\langle \widehat{G}_{2}(\alpha(T))\widehat{x}^{0}(T),\widehat{x}^{0}(T)\rangle\bigg]\\
&+\lambda E\bigg[\int_{0}^{T}\Big(\langle Q_{2}x^{*},x^{0}\rangle
+\langle \widehat{Q}_{2}\widehat{x}^{*},\widehat{x}^{0}\rangle
+\langle N_{2}u_{2}^{*},u_{2}^{0}\rangle\Big)dt\\
&+\langle G_{2}(\alpha(T))x^{*}(T),x^{0}(T)\rangle
+\langle \widehat{G}_{2}(\alpha(T))\widehat{x}^{*}(T),\widehat{x}^{0}(T)\rangle\bigg]\\
=&\frac{\lambda^{2}}{2}E\bigg[\int_{0}^{T}\Big(\langle Q_{2}x^{0},x^{0}\rangle
+\langle \widehat{Q}_{2}\widehat{x}^{0},\widehat{x}^{0}\rangle
+\langle N_{2}u_{2}^{0},u_{2}^{0}\rangle\Big)dt\\
&+\langle G_{2}(\alpha(T))x^{0}(T)+\widehat{G}_{2}(\alpha(T))\widehat{x}^{0}(T),x^{0}(T)\rangle\bigg]\\
&+\lambda E\bigg[\int_{0}^{T}\Big(\langle Q_{2}x^{*},x^{0}\rangle
+\langle \widehat{Q}_{2}\widehat{x}^{*},\widehat{x}^{0}\rangle
+\langle N_{2}u_{2}^{*},u_{2}^{0}\rangle\Big)dt\\
&+\langle G_{2}(\alpha(T))x^{*}(T)+\widehat{G}_{2}(\alpha(T))\widehat{x}^{*}(T),x^{0}(T)\rangle\bigg].
\end{aligned}
\end{equation}
On the one hand,
\begin{equation}\label{leader proof 3}
\begin{aligned}
&E[\langle G_{2}(\alpha(T))x^{0}(T)+\widehat{G}_{2}(\alpha(T))\widehat{x}^{0}(T),x^{0}(T)\rangle]\\
=&E[\langle y^{0}(T),x^{0}(T)\rangle]\\
=&E[\langle y^{0}(T),x^{0}(T)\rangle-\langle y^{0}(0),x^{0}(0)\rangle
-\langle \psi^{0}(T),\varphi^{0}(T)\rangle+\langle \psi^{0}(0),\varphi^{0}(0)\rangle]\\
=&E\bigg[\int_{0}^{T}\Big(-\langle Q_{2}x^{0},x^{0}\rangle
-\langle \widehat{Q}_{2}\widehat{x}^{0},x^{0}\rangle
+\langle u_{2}^{0},\mathbb{B}_{2}^{\top}y^{0}+\mathbb{D}_{2}^{\top}z^{0}
+\mathbb{F}_{2}\psi^{0}+\widehat{\mathbb{F}}_{2}\widehat{\psi}^{0}\rangle\Big)dt\bigg].
\end{aligned}
\end{equation}
Therefore,
\begin{equation}\label{leader proof 4}
\begin{aligned}
&E\bigg[\int_{0}^{T}\langle u_{2}^{0},N_{2}u_{2}^{0}+\mathbb{B}_{2}^{\top}y^{0}+\mathbb{D}_{2}^{\top}z^{0}
+\mathbb{F}_{2}\psi^{0}+\widehat{\mathbb{F}}_{2}\widehat{\psi}^{0}\rangle dt\bigg]\\
=&E\bigg[\int_{0}^{T}\Big(\langle Q_{2}x^{0},x^{0}\rangle
+\langle \widehat{Q}_{2}\widehat{x}^{0},x^{0}\rangle
+\langle N_{2}u_{2}^{0},u_{2}^{0}\rangle\Big)dt\\
&+\langle G_{2}(\alpha(T))x^{0}(T)+\widehat{G}_{2}(\alpha(T))\widehat{x}^{0}(T),x^{0}(T)\rangle\bigg]\\
=&E\bigg[\int_{0}^{T}\Big(\langle Q_{2}x^{0},x^{0}\rangle
+\langle \widehat{Q}_{2}\widehat{x}^{0},\widehat{x}^{0}\rangle
+\langle N_{2}u_{2}^{0},u_{2}^{0}\rangle\Big)dt\\
&+\langle G_{2}(\alpha(T))x^{0}(T),x^{0}(T)\rangle
+\langle \widehat{G}_{2}(\alpha(T))\widehat{x}^{0}(T),\widehat{x}^{0}(T)\rangle\bigg]\geq0,
\end{aligned}
\end{equation}
where we have used Assumption (A2)
and the following facts (noting Lemma \ref{Lemma conditional E}):
\begin{equation*}
\begin{aligned}
&E\langle\widehat{Q}_{2}\widehat{x}^{0},x^{0}\rangle
=E\langle\widehat{Q}_{2}\widehat{x}^{0},\widehat{x}^{0}\rangle\geq0,\\
&E\langle\widehat{G}_{2}\widehat{x}^{0}(T),x^{0}(T)\rangle
=E\langle\widehat{G}_{2}\widehat{x}^{0}(T),\widehat{x}^{0}(T)\rangle\geq0.
\end{aligned}
\end{equation*}
On the other hand,
\begin{equation}\label{leader proof 5}
\begin{aligned}
&E[\langle G_{2}(\alpha(T))x^{*}(T)+\widehat{G}_{2}(\alpha(T))\widehat{x}^{*}(T),x^{0}(T)\rangle]\\
=&E[\langle y(T),x^{0}(T)\rangle]\\
=&E[\langle y(T),x^{0}(T)\rangle-\langle y(0),x^{0}(0)\rangle
-\langle \psi(T),\varphi^{0}(T)\rangle+\langle \psi(0),\varphi^{0}(0)\rangle]\\
=&E\bigg[\int_{0}^{T}\Big(-\langle Q_{2}x^{*},x^{0}\rangle
-\langle \widehat{Q}_{2}\widehat{x}^{*},x^{0}\rangle
+\langle u_{2}^{0},\mathbb{B}_{2}^{\top}y+\mathbb{D}_{2}^{\top}z
+\mathbb{F}_{2}\psi+\widehat{\mathbb{F}}_{2}\widehat{\psi}\rangle\Big)dt\bigg].
\end{aligned}
\end{equation}
Thus, combining (\ref{leader proof 2}), (\ref{leader proof 3}), and (\ref{leader proof 5})
leads to
\begin{equation*}
\begin{aligned}
&J_{2}(u_{1}^{*},u_{2})-J_{2}(u_{1}^{*},u_{2}^{*})\\
=&\frac{\lambda^{2}}{2}
E\bigg[\int_{0}^{T}\langle u_{2}^{0},N_{2}u_{2}^{0}+\mathbb{B}_{2}^{\top}y^{0}+\mathbb{D}_{2}^{\top}z^{0}
+\mathbb{F}_{2}\psi^{0}+\widehat{\mathbb{F}}_{2}\widehat{\psi}^{0}\rangle dt\bigg]\\
&+\lambda E\bigg[\int_{0}^{T}\langle u_{2}^{0},N_{2}u_{2}^{*}
+\mathbb{B}_{2}^{\top}y+\mathbb{D}_{2}^{\top}z
+\mathbb{F}_{2}\psi+\widehat{\mathbb{F}}_{2}\widehat{\psi}\rangle dt\bigg].
\end{aligned}
\end{equation*}
From (\ref{leader proof 4}), we deduce that $u_{2}^{*}$ is optimal
if and only if
\begin{equation*}
\begin{aligned}
N_{2}u_{2}^{*}+\mathbb{B}_{2}^{\top}y+\mathbb{D}_{2}^{\top}z
+\mathbb{F}_{2}\psi+\widehat{\mathbb{F}}_{2}\widehat{\psi}=0.
\end{aligned}
\end{equation*}
The proof is completed.
\end{proof}
Similar to the follower's problem, we also expect to derive a state feedback representation
for $u_{2}^{*}$ defined by (\ref{leader original optimal condition}), which, as shown later,
is non-anticipating. To apply the dimensional augmentation approach by Yong \cite{Yong2002},
we denote
\begin{equation*}
\begin{aligned}
			X=\left[
			\begin{array}{c}
				x^{*} \\
				\psi \\
			\end{array}
			\right],\quad
			Y=\left[
			\begin{array}{c}
				y \\
				\varphi^{*} \\
			\end{array}
			\right],\quad
			Z=\left[
			\begin{array}{c}
				z \\
				\theta^{*} \\
			\end{array}
			\right],\quad
			K=\left[
			\begin{array}{c}
				k \\
				\eta^{*} \\
			\end{array}
			\right],\quad
			X_{0}=\left[
			\begin{array}{c}
				x_{0} \\
				0 \\
			\end{array}
			\right],
\end{aligned}
\end{equation*}
\begin{equation*}
\begin{aligned}
			\mathbf{A}=\left[
			\begin{array}{cc}
				\mathbb{A} & 0 \\
				0 & \mathbb{A} \\
			\end{array}
			\right],\quad
			\widehat{\mathbf{A}}=\left[
			\begin{array}{cc}
				\widehat{\mathbb{A}} & 0 \\
				0 & \widehat{\mathbb{A}} \\
			\end{array}
			\right],\quad
			\mathbf{C}=\left[
			\begin{array}{cc}
				\mathbb{C} & 0 \\
				0 & \mathbb{C} \\
			\end{array}
			\right],\quad
			\widehat{\mathbf{C}}=\left[
			\begin{array}{cc}
				\widehat{\mathbb{C}} & 0 \\
				0 & \widehat{\mathbb{C}} \\
			\end{array}
			\right],
\end{aligned}
\end{equation*}
\begin{equation*}
\begin{aligned}
			\mathbf{B}_{1}=\left[
			\begin{array}{cc}
				0 & \mathbb{B}_{1} \\
				\mathbb{B}_{1} & 0 \\
			\end{array}
			\right],\quad
			\mathbf{B}_{2}=\left[
			\begin{array}{c}
				\mathbb{B}_{2} \\
				0 \\
			\end{array}
			\right],\quad
			\mathbf{D}_{1}=\left[
			\begin{array}{cc}
				0 & \mathbb{D}_{1} \\
				\mathbb{D}_{1} & 0 \\
			\end{array}
			\right],\quad
			\mathbf{D}_{2}=\left[
			\begin{array}{c}
				\mathbb{D}_{2} \\
				0 \\
			\end{array}
			\right],
\end{aligned}
\end{equation*}
\begin{equation*}
\begin{aligned}
			\mathbf{F}_{1}=\left[
			\begin{array}{cc}
				0 & \mathbb{F}_{1} \\
				\mathbb{F}_{1} & 0 \\
			\end{array}
			\right],\quad
			\mathbf{F}_{2}=\left[
			\begin{array}{cc}
				0 & \mathbb{F}_{2} \\
			\end{array}
			\right],\quad
			\widehat{\mathbf{F}}_{2}=\left[
			\begin{array}{cc}
				0 & \widehat{\mathbb{F}}_{2} \\
			\end{array}
			\right],
\end{aligned}
\end{equation*}
\begin{equation*}
\begin{aligned}
			\mathbf{Q}_{2}=\left[
			\begin{array}{cc}
				Q_{2} & 0 \\
				0 & 0 \\
			\end{array}
			\right],\quad
			\widehat{\mathbf{Q}}_{2}=\left[
			\begin{array}{cc}
				\widehat{Q}_{2} & 0 \\
				0 & 0 \\
			\end{array}
			\right],\quad
			\mathbf{G}_{2}=\left[
			\begin{array}{cc}
				G_{2} & 0 \\
				0 & 0 \\
			\end{array}
			\right],\quad
			\widehat{\mathbf{G}}_{2}=\left[
			\begin{array}{cc}
				\widehat{G}_{2} & 0 \\
				0 & 0 \\
			\end{array}
			\right].
\end{aligned}
\end{equation*}
Then, (\ref{leader system}) and (\ref{leader adjoint equation})
can be rewritten as
\begin{equation}\label{leader Hamiltonian system}
\left\{
\begin{aligned}
dX=&\Big[\mathbf{A}X+\widehat{\mathbf{A}}\widehat{X}
+\mathbf{F}_{1}Y+\mathbf{B}_{1}Z+\mathbf{B}_{2}u_{2}^{*}\Big]dt\\
&+\Big[\mathbf{C}X+\widehat{\mathbf{C}}\widehat{X}
+\mathbf{B}_{1}^{\top}Y+\mathbf{D}_{1}Z+\mathbf{D}_{2}u_{2}^{*}\Big]dW,\\
dY=&-\Big[\mathbf{A}^{\top}Y+\widehat{\mathbf{A}}^{\top}\widehat{Y}
+\mathbf{C}^{\top}Z+\widehat{\mathbf{C}}^{\top}\widehat{Z}
+\mathbf{Q}_{2}X+\widehat{\mathbf{Q}}_{2}\widehat{X}\\
&+\mathbf{F}_{2}^{\top}u_{2}^{*}+\widehat{\mathbf{F}}_{2}^{\top}\widehat{u}_{2}^{*}\Big]dt
+ZdW+K\bullet dM,\\
X(0)=&X_{0},\quad Y(T)=\mathbf{G}_{2}(\alpha(T))X(T)
+\widehat{\mathbf{G}}_{2}(\alpha(T))\widehat{X}(T),
\end{aligned}
\right.
\end{equation}
and (\ref{leader original optimal condition}) becomes
\begin{equation}\label{leader optimal condition}
\begin{aligned}
0=N_{2}u_{2}^{*}+\mathbf{B}_{2}^{\top}Y
+\mathbf{D}_{2}^{\top}Z+\mathbf{F}_{2}X
+\widehat{\mathbf{F}}_{2}\widehat{X}.
\end{aligned}
\end{equation}
\begin{theorem}
Let Assumptions (A1) and (A2) hold. An optimal control $u_{2}^{*}$ for the leader
is given by
\begin{equation}\label{leader optimal control}
\begin{aligned}
u_{2}^{*}(t)=-\widetilde{N}_{2}^{-1}(t,\alpha(t))\Big[\mathbf{S}_{2}(t,\alpha(t))X(t)
+\widehat{\mathbf{S}}_{2}(t,\alpha(t))\widehat{X}(t)\Big],
\end{aligned}
\end{equation}
where, for the sake of simplicity, we denote
\begin{equation*}
\begin{aligned}
\widetilde{N}_{2}(t,i)=&N_{2}(i)+\mathbf{D}_{2}^{\top}(t,i)(I-P_{2}(t,i)\mathbf{D}_{1}(t,i))^{-1}P_{2}(t,i)\mathbf{D}_{2}(t,i),\\
\mathbf{J}_{2}(t,i)=&\mathbf{B}_{1}^{\top}(t,i)P_{2}(t,i)+\mathbf{C}(t,i),\quad
\widehat{\mathbf{J}}_{2}(t,i)=\mathbf{B}_{1}^{\top}(t,i)\widehat{P}_{2}(t,i)+\widehat{\mathbf{C}}(t,i),\\
\mathbf{S}_{2}(t,i)=&\mathbf{D}_{2}^{\top}(t,i)(I-P_{2}(t,i)\mathbf{D}_{1}(t,i))^{-1}P_{2}(t,i)\mathbf{J}_{2}(t,i)
+\mathbf{B}_{2}^{\top}(t,i)P_{2}(t,i)+\mathbf{F}_{2}(t,i),\\
\widehat{\mathbf{S}}_{2}(t,i)=&\mathbf{D}_{2}^{\top}(t,i)(I-P_{2}(t,i)\mathbf{D}_{1}(t,i))^{-1}P_{2}(t,i)\widehat{\mathbf{J}}_{2}(t,i)
+\mathbf{B}_{2}^{\top}(t,i)\widehat{P}_{2}(t,i)+\widehat{\mathbf{F}}_{2}(t,i),\quad i\in\mathcal{M},
\end{aligned}
\end{equation*}
provided $\widetilde{N}_{2}$ and $(I-P_{2}\mathbf{D}_{1})$ are invertible
and $P_{2}(\cdot,i)$ and $\widehat{P}_{2}(\cdot,i)$, $i\in\mathcal{M}$,
are solutions of Riccati equations (\ref{leader 1}) and (\ref{leader 2}), respectively.
\end{theorem}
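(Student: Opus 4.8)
The plan is to carry out the four-step scheme in the augmented framework: decouple the fully coupled Hamiltonian system (\ref{leader Hamiltonian system}) by a Riccati-type ansatz, then read off the feedback law from the optimality condition (\ref{leader optimal condition}). Motivated by the terminal condition $Y(T)=\mathbf{G}_{2}(\alpha(T))X(T)+\widehat{\mathbf{G}}_{2}(\alpha(T))\widehat{X}(T)$, I would postulate the decoupling relation
\[
Y(t)=P_{2}(t,\alpha(t))X(t)+\widehat{P}_{2}(t,\alpha(t))\widehat{X}(t)
\]
for deterministic, differentiable, symmetric matrix functions $P_{2}(\cdot,i)$ and $\widehat{P}_{2}(\cdot,i)$, $i\in\mathcal{M}$. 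Taking $\mathcal{F}_{t}^{\alpha}$-conditional expectations and writing $\widetilde{P}_{2}=P_{2}+\widehat{P}_{2}$ gives $\widehat{Y}=\widetilde{P}_{2}\widehat{X}$. Since $\widehat{X}$ carries no $dW$-term by Lemma \ref{Lemma filtering}, applying It\^{o}'s formula for Markov-modulated processes to the ansatz produces a Brownian part arising only through $P_{2}\,dX$.

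Next I would match the $dW$-coefficients of $dY$ in (\ref{leader Hamiltonian system}) against those from the differentiated ansatz, giving the implicit relation $Z=P_{2}\big[\mathbf{C}X+\widehat{\mathbf{C}}\widehat{X}+\mathbf{B}_{1}^{\top}Y+\mathbf{D}_{1}Z+\mathbf{D}_{2}u_{2}^{*}\big]$, in which $Z$ appears on both sides because $\mathbf{D}_{1}\neq0$. Substituting $Y=P_{2}X+\widehat{P}_{2}\widehat{X}$, collecting, and inverting $(I-P_{2}\mathbf{D}_{1})$ yields
\[
Z=(I-P_{2}\mathbf{D}_{1})^{-1}P_{2}\big[\mathbf{J}_{2}X+\widehat{\mathbf{J}}_{2}\widehat{X}+\mathbf{D}_{2}u_{2}^{*}\big],
\]
with $\mathbf{J}_{2}=\mathbf{B}_{1}^{\top}P_{2}+\mathbf{C}$ and $\widehat{\mathbf{J}}_{2}=\mathbf{B}_{1}^{\top}\widehat{P}_{2}+\widehat{\mathbf{C}}$, exactly as defined in the statement.

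Inserting $Y$ and this expression for $Z$ into the optimality condition (\ref{leader optimal condition}) and grouping the $u_{2}^{*}$-terms, the coefficient of $u_{2}^{*}$ is precisely $\widetilde{N}_{2}=N_{2}+\mathbf{D}_{2}^{\top}(I-P_{2}\mathbf{D}_{1})^{-1}P_{2}\mathbf{D}_{2}$; under its invertibility I would solve for $u_{2}^{*}$ and identify the coefficients of $X$ and $\widehat{X}$ as $\mathbf{S}_{2}$ and $\widehat{\mathbf{S}}_{2}$, which is exactly the feedback representation (\ref{leader optimal control}). To close the argument I would substitute the feedback law and the $Z$-expression back into the $dt$-parts of $dY$ from both (\ref{leader Hamiltonian system}) and the differentiated ansatz, and equate the coefficients of $X$ and of $\widehat{X}$ separately; here Lemma \ref{Lemma conditional E} handles the mean-field cross terms and the generator sums $\sum_{j\in\mathcal{M}}\lambda_{ij}[\,\cdot(t,j)-\cdot(t,i)\,]$ from the Markov modulation account for the jump part. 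This produces the two coupled new-type Riccati equations (\ref{leader 1}) and (\ref{leader 2}) for $P_{2}$ and $\widehat{P}_{2}$.

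The main obstacle is twofold. The algebraic heart is the implicit $Z$-equation: because the follower's optimal control enters the leader's state diffusion, $\mathbf{D}_{1}$ is genuinely nonzero, so the usual decoupling fails unless $(I-P_{2}\mathbf{D}_{1})$ is invertible; this is exactly what forces the \emph{new-type} Riccati equations, and the invertibility of both $(I-P_{2}\mathbf{D}_{1})$ and $\widetilde{N}_{2}$ along the solution is not automatic and must be imposed as a standing hypothesis rather than derived. The second difficulty is bookkeeping: the augmented coefficients $\mathbf{B}_{1},\mathbf{D}_{1},\mathbf{F}_{1}$ are anti-block-diagonal, so $P_{2}$ and $\widehat{P}_{2}$ are genuinely $2n\times 2n$ and couple the $(y,x^{*})$ and $(\varphi^{*},\psi)$ blocks; keeping the conditional-expectation terms consistent when separating the $X$- and $\widehat{X}$-coefficients in the $dt$-comparison is where the calculation is most error-prone.
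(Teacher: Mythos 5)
Your proposal is correct and follows essentially the same route as the paper's own proof: the decoupling ansatz $Y=P_{2}X+\widehat{P}_{2}\widehat{X}$ motivated by the terminal condition, matching the $dW$-coefficients to obtain the implicit relation for $Z$ and inverting $(I-P_{2}\mathbf{D}_{1})$, substituting into the optimality condition (\ref{leader optimal condition}) to read off $u_{2}^{*}=-\widetilde{N}_{2}^{-1}[\mathbf{S}_{2}X+\widehat{\mathbf{S}}_{2}\widehat{X}]$, and then matching the $dt$-coefficients of $X$ and $\widehat{X}$ to arrive at the Riccati equations (\ref{leader 1}) and (\ref{leader 2}). You also correctly flag, as the paper does, that the invertibility of $\widetilde{N}_{2}$ and $(I-P_{2}\mathbf{D}_{1})$ must be imposed as a hypothesis rather than derived.
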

\begin{proof}
In the light of the terminal condition of (\ref{leader Hamiltonian system}),
it is natural to set
\begin{equation}\label{leader FSS}
\begin{aligned}
Y(t)=P_{2}(t,\alpha(t))X(t)+\widehat{P}_{2}(t,\alpha(t))\widehat{X}(t),
\end{aligned}
\end{equation}
for some $R^{2n\times 2n}$-valued deterministic, differentiable, and symmetric
functions $P_{2}(t,i)$ and $\widehat{P}_{2}(t,i)$, $i\in \mathcal{M}$.
Applying It\^{o}'s formula for Markov-modulated processes to (\ref{leader FSS}), we have
\begin{equation}\label{leader dY}
\begin{aligned}
&dY=\Big(\dot{P}_{2}+\sum_{j\in\mathcal{M}}\lambda_{\alpha(t),j}[P_{2}(t,j)-P_{2}(t,\alpha(t))]\Big)Xdt
+\sum_{i,j\in\mathcal{M}}[P_{2}(t,j)-P_{2}(t,i)]XdM_{ij}\\
&+P_{2}\Big[\mathbf{A}X+\widehat{\mathbf{A}}\widehat{X}
+\mathbf{F}_{1}Y+\mathbf{B}_{1}Z+\mathbf{B}_{2}u_{2}^{*}\Big]dt
+P_{2}\Big[\mathbf{C}X+\widehat{\mathbf{C}}\widehat{X}
+\mathbf{B}_{1}^{\top}Y+\mathbf{D}_{1}Z+\mathbf{D}_{2}u_{2}^{*}\Big]dW\\
&+\Big(\dot{\widehat{P}}_{2}
+\sum_{j\in\mathcal{M}}\lambda_{\alpha(t),j}[\widehat{P}_{2}(t,j)-\widehat{P}_{2}(t,\alpha(t))]\Big)\widehat{X}dt
+\sum_{i,j\in\mathcal{M}}[\widehat{P}_{2}(t,j)-\widehat{P}_{2}(t,i)]\widehat{X}dM_{ij}\\
&+\widehat{P}_{2}\Big[(\mathbf{A}+\widehat{\mathbf{A}})\widehat{X}
+\mathbf{F}_{1}\widehat{Y}+\mathbf{B}_{1}\widehat{Z}+\mathbf{B}_{2}\widehat{u}_{2}^{*}\Big]dt.
\end{aligned}
\end{equation}	
Comparing the coefficients of $dW$ parts in (\ref{leader Hamiltonian system})
and (\ref{leader dY}), we obtain
\begin{equation}\label{leader Z}
\begin{aligned}
Z=(I-P_{2}\mathbf{D}_{1})^{-1}P_{2}
\Big[\mathbf{J}_{2}X+\widehat{\mathbf{J}}_{2}\widehat{X}+\mathbf{D}_{2}u_{2}^{*}\Big].
\end{aligned}
\end{equation}
Substituting (\ref{leader FSS}) and (\ref{leader Z}) into (\ref{leader optimal condition})
and observing that $(I-P_{2}\mathbf{D}_{1})^{-1}P_{2}$ is symmetric, we get
\begin{equation*}
\begin{aligned}
u_{2}^{*}=&-\widetilde{N}_{2}^{-1}\Big[\mathbf{S}_{2}X
+\widehat{\mathbf{S}}_{2}\widehat{X}\Big].
\end{aligned}
\end{equation*}
Inserting (\ref{leader FSS}), (\ref{leader Z}), and (\ref{leader optimal control})
into (\ref{leader Hamiltonian system}) and (\ref{leader dY}), respectively, we have
\begin{equation}\label{leader compare 1}
\begin{aligned}
dY=&-\Big[\Big(\mathbf{A}^{\top}P_{2}+\mathbf{Q}_{2}
+\mathbf{C}^{\top}(I-P_{2}\mathbf{D}_{1})^{-1}P_{2}\mathbf{J}_{2}\\
&-\mathbf{C}^{\top}(I-P_{2}\mathbf{D}_{1})^{-1}P_{2}\mathbf{D}_{2}\widetilde{N}_{2}^{-1}\mathbf{S}_{2}
-\mathbf{F}_{2}^{\top}\widetilde{N}_{2}^{-1}\mathbf{S}_{2}\Big)X\\
&+\Big(\mathbf{A}^{\top}\widehat{P}_{2}+\widehat{\mathbf{A}}^{\top}(P_{2}+\widehat{P}_{2})+\widehat{\mathbf{Q}}_{2}\\
&+\mathbf{C}^{\top}(I-P_{2}\mathbf{D}_{1})^{-1}P_{2}\widehat{\mathbf{J}}_{2}
+\widehat{\mathbf{C}}^{\top}(I-P_{2}\mathbf{D}_{1})^{-1}P_{2}(\mathbf{J}_{2}+\widehat{\mathbf{J}}_{2})\\
&-\mathbf{C}^{\top}(I-P_{2}\mathbf{D}_{1})^{-1}P_{2}\mathbf{D}_{2}\widetilde{N}_{2}^{-1}\widehat{\mathbf{S}}_{2}\\
&-\widehat{\mathbf{C}}^{\top}(I-P_{2}\mathbf{D}_{1})^{-1}P_{2}\mathbf{D}_{2}\widetilde{N}_{2}^{-1}
(\mathbf{S}_{2}+\widehat{\mathbf{S}}_{2})\\
&-\mathbf{F}_{2}^{\top}\widetilde{N}_{2}^{-1}\widehat{\mathbf{S}}_{2}
-\widehat{\mathbf{F}}_{2}^{\top}\widetilde{N}_{2}^{-1}(\mathbf{S}_{2}
+\widehat{\mathbf{S}}_{2})\Big)\widehat{X}\Big]dt\\
&+\Big\{\cdots\Big\}dW+\Big\{\cdots\Big\}\bullet dM,
\end{aligned}
\end{equation}
and
\begin{equation}\label{leader compare 2}
\begin{aligned}
dY=&\Big[\Big(\dot{P}_{2}+P_{2}\mathbf{A}+P_{2}\mathbf{F}_{1}P_{2}
+P_{2}\mathbf{B}_{1}(I-P_{2}\mathbf{D}_{1})^{-1}P_{2}\mathbf{J}_{2}\\
&-P_{2}\mathbf{B}_{1}(I-P_{2}\mathbf{D}_{1})^{-1}P_{2}\mathbf{D}_{2}\widetilde{N}_{2}^{-1}\mathbf{S}_{2}
-P_{2}\mathbf{B}_{2}\widetilde{N}_{2}^{-1}\mathbf{S}_{2}\\
&+\sum_{j\in\mathcal{M}}\lambda_{\alpha(t),j}[P_{2}(t,j)-P_{2}(t,\alpha(t))]\Big)X\\
&+\Big(\dot{\widehat{P}}_{2}+P_{2}\widehat{\mathbf{A}}+\widehat{P}_{2}(\mathbf{A}+\widehat{\mathbf{A}})
+P_{2}\mathbf{F}_{1}\widehat{P}_{2}+\widehat{P}_{2}\mathbf{F}_{1}(P_{2}+\widehat{P}_{2})\\
&+P_{2}\mathbf{B}_{1}(I-P_{2}\mathbf{D}_{1})^{-1}P_{2}\widehat{\mathbf{J}}_{2}
+\widehat{P}_{2}\mathbf{B}_{1}(I-P_{2}\mathbf{D}_{1})^{-1}P_{2}(\mathbf{J}_{2}+\widehat{\mathbf{J}}_{2})\\
&-P_{2}\mathbf{B}_{1}(I-P_{2}\mathbf{D}_{1})^{-1}P_{2}\mathbf{D}_{2}\widetilde{N}_{2}^{-1}\widehat{\mathbf{S}}_{2}
-P_{2}\mathbf{B}_{2}\widetilde{N}_{2}^{-1}\widehat{\mathbf{S}}_{2}\\
&-\widehat{P}_{2}\mathbf{B}_{1}(I-P_{2}\mathbf{D}_{1})^{-1}P_{2}\mathbf{D}_{2}(\mathbf{S}_{2}+\widehat{\mathbf{S}}_{2})
-\widehat{P}_{2}\mathbf{B}_{2}\widetilde{N}_{2}^{-1}(\mathbf{S}_{2}+\widehat{\mathbf{S}}_{2})\\
&+\sum_{j\in\mathcal{M}}\lambda_{\alpha(t),j}[\widehat{P}_{2}(t,j)-\widehat{P}_{2}(t,\alpha(t))]\Big)\widehat{X}\Big]dt\\
&+\Big\{\cdots\Big\}dW+\Big\{\cdots\Big\}\bullet dM.
\end{aligned}
\end{equation}
By equalizing the coefficients of $X$ and $\widehat{X}$ in
(\ref{leader compare 1}) and (\ref{leader compare 2}),
we obtain the following two Riccati equations
\begin{equation}\label{leader 1}
\left\{
\begin{aligned}
\dot{P}_{2}(t,i)
=&-\Big[P_{2}(t,i)\mathbf{A}(t,i)+\mathbf{A}^{\top}(t,i)P_{2}(t,i)
+P_{2}(t,i)\mathbf{F}_{1}(t,i)P_{2}(t,i)+\mathbf{Q}_{2}(i)\\
&+\mathbf{J}_{2}^{\top}(t,i)(I-P_{2}(t,i)\mathbf{D}_{1}(t,i))^{-1}P_{2}(t,i)\mathbf{J}_{2}(t,i)\\
&-\mathbf{S}_{2}^{\top}(t,i)\widetilde{N}_{2}^{-1}(t,i)\mathbf{S}_{2}(t,i)
+\sum_{j\in\mathcal{M}}\lambda_{ij}[P_{2}(t,j)-P_{2}(t,i)]\Big],\\
P_{2}(T,i)=&\mathbf{G}_{2}(i),\quad i\in\mathcal{M},
\end{aligned}
\right.
\end{equation}
and
\begin{equation}\label{leader 2}
\left\{
\begin{aligned}
\dot{\widehat{P}}_{2}(t,i)
=&-\Big[\widehat{P}_{2}(t,i)(\mathbf{A}(t,i)+\widehat{\mathbf{A}}(t,i))
+(\mathbf{A}(t,i)+\widehat{\mathbf{A}}(t,i))^{\top}\widehat{P}_{2}(t,i)\\
&+P_{2}(t,i)\widehat{\mathbf{A}}(t,i)+\widehat{\mathbf{A}}^{\top}(t,i)P_{2}(t,i)
+P_{2}(t,i)\mathbf{F}_{1}(t,i)\widehat{P}_{2}(t,i)\\
&+\widehat{P}_{2}(t,i)\mathbf{F}_{1}(t,i)P_{2}(t,i)
+\widehat{P}_{2}(t,i)\mathbf{F}_{1}(t,i)\widehat{P}_{2}(t,i)+\widehat{\mathbf{Q}}_{2}(i)\\
&+\mathbf{J}_{2}^{\top}(t,i)(I-P_{2}(t,i)\mathbf{D}_{1}(t,i))^{-1}P_{2}(t,i)\widehat{\mathbf{J}}_{2}(t,i)\\
&+\widehat{\mathbf{J}}_{2}^{\top}(t,i)(I-P_{2}(t,i)\mathbf{D}_{1}(t,i))^{-1}P_{2}(t,i)\mathbf{J}_{2}(t,i)\\
&+\widehat{\mathbf{J}}_{2}^{\top}(t,i)(I-P_{2}(t,i)\mathbf{D}_{1}(t,i))^{-1}P_{2}(t,i)\widehat{\mathbf{J}}_{2}(t,i)\\
&-\mathbf{S}_{2}^{\top}(t,i)\widetilde{N}_{2}^{-1}(t,i)\widehat{\mathbf{S}}_{2}(t,i)
-\widehat{\mathbf{S}}_{2}^{\top}(t,i)\widetilde{N}_{2}^{-1}(t,i)\mathbf{S}_{2}(t,i)\\
&-\widehat{\mathbf{S}}_{2}^{\top}(t,i)\widetilde{N}_{2}^{-1}(t,i)\widehat{\mathbf{S}}_{2}(t,i)
+\sum_{j\in\mathcal{M}}\lambda_{ij}[\widehat{P}_{2}(t,j)-\widehat{P}_{2}(t,i)]\Big],\\
\widehat{P}_{2}(T,i)=&\widehat{\mathbf{G}}_{2}(i),\quad i\in\mathcal{M}.
\end{aligned}
\right.
\end{equation}
As the follower's problem, we can also let $\widetilde{P}_{2}(t,i)=P_{2}(t,i)+\widehat{P}_{2}(t,i)$,
$i\in\mathcal{M}$, to get an equation that is structurally similar to (\ref{leader 1}) and can be used
instead of (\ref{leader 2}), i.e.,
\begin{equation}\label{leader 4}
\left\{
\begin{aligned}
\dot{\widetilde{P}}_{2}(t,i)
=&-\Big[\widetilde{P}_{2}(t,i)\widetilde{\mathbf{A}}(t,i)+\widetilde{\mathbf{A}}^{\top}(t,i)\widetilde{P}_{2}(t,i)
+\widetilde{P}_{2}(t,i)\mathbf{F}_{1}(t,i)\widetilde{P}_{2}(t,i)+\widetilde{\mathbf{Q}}_{2}(i)\\
&+\widetilde{\mathbf{J}}_{2}^{\top}(t,i)(I-P_{2}(t,i)\mathbf{D}_{1}(t,i))^{-1}P_{2}(t,i)\widetilde{\mathbf{J}}_{2}(t,i)\\
&-\widetilde{\mathbf{S}}_{2}^{\top}(t,i)\widetilde{N}_{2}^{-1}(t,i)\widetilde{\mathbf{S}}_{2}(t,i)
+\sum_{j\in\mathcal{M}}\lambda_{ij}[\widetilde{P}_{2}(t,j)-\widetilde{P}_{2}(t,i)]\Big],\\
\widetilde{P}_{2}(T,i)=&\widetilde{\mathbf{G}}_{2}(i),\quad i\in\mathcal{M},
\end{aligned}
\right.
\end{equation}
where $\widetilde{\mathbf{H}}\doteq\mathbf{H}+\widehat{\mathbf{H}}$
for $\mathbf{H}=\mathbf{A},\mathbf{Q}_{2},\mathbf{J}_{2},\mathbf{S}_{2},\mathbf{G}_{2}$.
\end{proof}
Then, we compute the minimal cost for the leader under $u_{2}^{*}$
defined by (\ref{leader optimal control}), and derive the \emph{non-anticipating}
state feedback representation of the follower's optimal control (\ref{follower optimal control}).
\begin{theorem}\label{leader theorem 2}
Let Assumptions (A1) and (A2) hold. Suppose that the Riccati equations (\ref{leader 1})
and (\ref{leader 4}) have solutions $P_{2}(\cdot,i)$ and $\widetilde{P}_{2}(\cdot,i)$,
$i\in\mathcal{M}$, respectively, such that $\widetilde{N}_{2}$ and $(I-P_{2}\mathbf{D}_{1})$
are invertible. Then,
\begin{equation}\label{leader optimal cost}
\begin{aligned}
J_{2}(u_{1}^{*}(\cdot),u_{2}^{*}(\cdot))
=\langle \widetilde{P}_{2}^{(11)}(0,i)x_{0},x_{0}\rangle,
\end{aligned}
\end{equation}
where $\widetilde{P}_{2}^{(11)}(0,i)$ is taken from
$$
\widetilde{P}_{2}(0,i)=\left(
\begin{array}{cc}
\widetilde{P}_{2}^{(11)}(0,i) & \widetilde{P}_{2}^{(12)}(0,i) \\
(\widetilde{P}_{2}^{(12)})^{\top}(0,i) & \widetilde{P}_{2}^{(22)}(0,i) \\
\end{array}
\right).
$$
Moreover, the non-anticipating state feedback representation of the follower's
optimal control (\ref{follower optimal control}) is given by (\ref{follower non anticipating}).
\end{theorem}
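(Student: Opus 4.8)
The plan is to prove the two assertions separately, reusing the machinery already in place for the two players. The minimal cost will come from a completion-of-squares argument in the augmented $2n$-dimensional variables $X=(x^*,\psi)^{\top}$, exactly parallel to Lemma \ref{Theorem follower} but now driven by the Riccati equations \eqref{leader 1} and \eqref{leader 4}. The non-anticipating feedback will then follow by substituting the leader's feedback relations \eqref{leader FSS} and \eqref{leader Z} into the anticipating form \eqref{follower optimal control} of the follower's control.

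For the cost, I would first rewrite $J_2(u_1^*,u_2^*)$ in the augmented variables. Since $\mathbf{Q}_2,\widehat{\mathbf{Q}}_2,\mathbf{G}_2,\widehat{\mathbf{G}}_2$ carry the original data only in their upper-left block and vanish elsewhere, one has $\langle\mathbf{Q}_2X,X\rangle=\langle Q_2x^*,x^*\rangle$, and likewise for the terminal and mean-field terms, so $J_2$ is unchanged and the $\psi$-directions contribute nothing. Then, under the optimal feedback $u_2^*$ of \eqref{leader optimal control}, I would apply It\^o's formula for Markov-modulated processes to $\langle P_2(X-\widehat{X}),X-\widehat{X}\rangle$ and to $\langle\widetilde{P}_2\widehat{X},\widehat{X}\rangle$, integrate over $[0,T]$, take expectations, and use Lemma \ref{Lemma conditional E} to dispose of the conditional-mean cross terms. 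The equations \eqref{leader 1} and \eqref{leader 4} are designed precisely so that, after cancellation, the running integrand reduces to a perfect square in the control that vanishes at $u_2=u_2^*$; what survives is only the boundary contribution proportional to $\langle\widetilde{P}_2(0,i)X_0,X_0\rangle$. Because $X_0=(x_0,0)^{\top}$ is deterministic, the centered part $X(0)-\widehat{X}(0)$ vanishes and only the upper-left block of $\widetilde{P}_2(0,i)$ survives, which by the block structure of $X_0$ is the quadratic form in $x_0$ recorded in \eqref{leader optimal cost}.

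For the non-anticipating representation, the key observation is that once the ansatz $Y=P_2X+\widehat{P}_2\widehat{X}$ of \eqref{leader FSS} holds, every backward quantity entering the follower's control becomes an explicit feedback of the forward augmented state. Reading off the lower $n$-dimensional block of \eqref{leader FSS} expresses $\varphi^*$ as a linear function of $(x^*,\psi,\widehat{x}^*,\widehat{\psi})$; reading off the lower block of \eqref{leader Z}, after inserting $u_2^*$ from \eqref{leader optimal control}, does the same for $\theta^*$; and $u_2^*$ is itself already such a feedback. Substituting these three expressions into $\Phi=B_1^{\top}\varphi^*+D_1^{\top}\theta^*+D_1^{\top}P_1D_2u_2^*$ and then into \eqref{follower optimal control}, and collecting the coefficients of $(x^*,\psi)$ and of $(\widehat{x}^*,\widehat{\psi})$, yields a gain matrix acting on $(X,\widehat{X})$, which is exactly \eqref{follower non anticipating}. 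This representation is non-anticipating because, after closing the loop with $Y=P_2X+\widehat{P}_2\widehat{X}$, $Z$ from \eqref{leader Z}, and $u_2^*$, the augmented state $X=(x^*,\psi)^{\top}$ solves a genuine \emph{forward} conditional mean-field SDE with regime switching, uniquely solvable by the results of Nguyen et al.; hence $X(t)$ is $\mathcal{F}_t$-adapted and no longer depends on the future of $u_2$, in contrast with the anticipating form \eqref{follower optimal control}.

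I expect the main obstacle to be the cost computation rather than the substitution. The It\^o bookkeeping now lives in the $2n$-dimensional augmented setting, and one must verify that the three quadratic groupings (the $P_2$-terms, the $\widetilde{P}_2$-terms, and the control cross terms) cancel against exactly the right-hand sides of \eqref{leader 1} and \eqref{leader 4}, with the zero blocks of $\mathbf{Q}_2,\mathbf{G}_2$ guaranteeing that the $\psi$-directions drop out of the cost. The feedback substitution is conceptually routine but delicate in practice: it requires careful tracking of the $n\times n$ sub-blocks of the $2n\times 2n$ matrices $P_2,\widehat{P}_2$ and of $(I-P_2\mathbf{D}_1)^{-1}P_2$, and keeping the block indices straight is where errors would most likely creep in.
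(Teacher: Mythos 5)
Your second assertion is handled exactly as in the paper and is fine: read $\varphi^{*}$ and $\theta^{*}$ off the lower blocks of (\ref{leader FSS}) and (\ref{leader Z}), insert $u_{2}^{*}$ from (\ref{leader optimal control}), substitute into $\Phi$, and collect coefficients of $X$ and $\widehat{X}$; the closed-loop forward equation for $X$ then makes the representation non-anticipating.

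The gap is in the value computation, and it is structural, not bookkeeping. In expectation your quadratic form is not a new object: by Lemma \ref{Lemma conditional E},
\begin{equation*}
\begin{aligned}
E\Big[\langle P_{2}(X-\widehat{X}),X-\widehat{X}\rangle+\langle\widetilde{P}_{2}\widehat{X},\widehat{X}\rangle\Big]
=E\langle P_{2}X+\widehat{P}_{2}\widehat{X},X\rangle
=E\langle Y,X\rangle
=E\big[\langle y,x^{*}\rangle+\langle\psi,\varphi^{*}\rangle\big],
\end{aligned}
\end{equation*}
with a \emph{plus} sign on $\langle\psi,\varphi^{*}\rangle$. The paper's proof instead applies It\^{o}'s formula to $\langle x^{*},y\rangle-\langle\psi,\varphi^{*}\rangle$; that minus sign is precisely what makes all the forward--backward cross terms (those carrying $\mathbb{F}_{1},\mathbb{B}_{1},\mathbb{D}_{1},\mathbb{F}_{2},\widehat{\mathbb{F}}_{2}$) cancel \emph{pointwise in $t$}, so that after inserting the stationarity condition (\ref{leader original optimal condition}) the drift is exactly minus the running cost. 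With your plus sign these terms do not cancel: the drift of $E\langle Y,X\rangle$ differs from minus the running cost by
\begin{equation*}
\begin{aligned}
2\frac{d}{dt}E\langle\psi,\varphi^{*}\rangle
=2E\Big[\langle\mathbb{F}_{1}y+\mathbb{B}_{1}z,\varphi^{*}\rangle
+\langle\mathbb{B}_{1}^{\top}y+\mathbb{D}_{1}z,\theta^{*}\rangle
-\langle\mathbb{F}_{2}\psi+\widehat{\mathbb{F}}_{2}\widehat{\psi},u_{2}^{*}\rangle\Big],
\end{aligned}
\end{equation*}
which is neither zero pointwise nor a perfect square in the control; it vanishes only after integration over $[0,T]$, thanks to the boundary conditions $\psi(0)=0$ and $\varphi^{*}(T)=0$. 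So your key claim --- that (\ref{leader 1}) and (\ref{leader 4}) are ``designed precisely so that the running integrand reduces to a perfect square vanishing at $u_{2}=u_{2}^{*}$'' --- is false. Those Riccati equations encode consistency of the ansatz (\ref{leader FSS})--(\ref{leader Z}) with the FBSDE (\ref{leader Hamiltonian system}); they are not the completion-of-squares equations of an LQ problem in the state $X$. Indeed, a completion of squares over varying $u_{2}$ cannot even be formulated here, because $\psi$ is an adjoint variable defined only along the optimal trajectory: the leader's state (\ref{leader system}) is an FBSDE, not a controlled forward SDE, which is the essential difference from the follower's verification in Lemma \ref{Theorem follower} that your plan tries to transplant. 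The repair is exactly the paper's missing ingredient: track $\langle\psi,\varphi^{*}\rangle$ with the opposite sign and use its boundary conditions, i.e., run the duality argument on $\langle x^{*},y\rangle-\langle\psi,\varphi^{*}\rangle$. A final caution hidden by your hedge ``proportional to'': carried out correctly, the argument yields $2J_{2}(u_{1}^{*},u_{2}^{*})=\langle y(0),x_{0}\rangle=\langle\widetilde{P}_{2}(0,i)X_{0},X_{0}\rangle$, so the constant in front of $\langle\widetilde{P}_{2}^{(11)}(0,i)x_{0},x_{0}\rangle$ must be pinned down explicitly rather than left implicit.
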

\begin{proof}
Note that
\begin{equation*}
\begin{aligned}
&E[\langle y(T),x^{*}(T)\rangle-\langle y(0),x^{*}(0)\rangle
-\langle \psi(T),\varphi^{*}(T)\rangle+\langle \psi(0),\varphi^{*}(0)\rangle]\\
=&E[\langle G_{2}(\alpha(T))x^{*}(T)+\widehat{G}_{2}(\alpha(T))\widehat{x}^{*}(T),x^{*}(T)\rangle
-\langle y(0),x^{*}(0)\rangle].
\end{aligned}
\end{equation*}
By applying It\^{o}'s formula for semi-martingales to
$\langle x^{*},y \rangle-\langle \psi,\varphi^{*}\rangle$, we have
\begin{equation*}
\begin{aligned}
&E[\langle G_{2}(\alpha(T))x^{*}(T)+\widehat{G}_{2}(\alpha(T))\widehat{x}^{*}(T),x^{*}(T)\rangle
-\langle y(0),x^{*}(0)\rangle]\\
=&E\bigg[\int_{0}^{T}\Big(-\langle Q_{2}x^{*},x^{*}\rangle
-\langle \widehat{Q}_{2}\widehat{x}^{*},x^{*}\rangle
+\langle u_{2}^{*},\mathbb{B}_{2}^{\top}y+\mathbb{D}_{2}^{\top}z
+\mathbb{F}_{2}\psi+\widehat{\mathbb{F}}_{2}\widehat{\psi}\rangle\Big)dt\bigg]\\
=&E\bigg[\int_{0}^{T}\Big(-\langle Q_{2}x^{*},x^{*}\rangle
-\langle \widehat{Q}_{2}\widehat{x}^{*},x^{*}\rangle
+\langle u_{2}^{*},\mathbf{B}_{2}^{\top}Y
+\mathbf{D}_{2}^{\top}Z+\mathbf{F}_{2}X
+\widehat{\mathbf{F}}_{2}\widehat{X}\rangle\Big)dt\bigg],
\end{aligned}
\end{equation*}
which implies that (noting (\ref{leader optimal condition}))
\begin{equation*}
\begin{aligned}
J_{2}(u_{1}^{*}(\cdot),u_{2}^{*}(\cdot))
=\langle Y(0),X(0)\rangle=\langle \widetilde{P}_{2}(0,i)X(0),X(0)\rangle
=\langle \widetilde{P}_{2}^{(11)}(0,i)x_{0},x_{0}\rangle.
\end{aligned}
\end{equation*}
On the other hand, note that $u_{2}^{*}$ defined by (\ref{leader optimal control})
for the leader is non-anticipating, thereby $u_{1}^{*}$ defined by
(\ref{follower optimal control}) for the follower can be also represented
in a non-anticipating way, i.e.,
\begin{equation}\label{follower non anticipating}
\begin{aligned}
u_{1}^{*}=&-\widetilde{N}_{1}^{-1}\Big[S_{1}x+\widehat{S}_{1}\widehat{x}+\Phi\Big]\\
=&-\widetilde{N}_{1}^{-1}\Big[(
\begin{array}{cc}
S_{1} & 0 \\
\end{array}
)X+(
\begin{array}{cc}
\widehat{S}_{1} & 0 \\
\end{array}
)\widehat{X}
+(
\begin{array}{cc}
0 & B_{1}^{\top} \\
\end{array}
)Y+(
\begin{array}{cc}
0 & D_{1}^{\top} \\
\end{array}
)Z+D_{1}^{\top}P_{1}D_{2}u_{2}^{*}\Big]\\
=&-\widetilde{N}_{1}^{-1}\Big[(
\begin{array}{cc}
S_{1} & 0 \\
\end{array}
)+(
\begin{array}{cc}
0 & B_{1}^{\top} \\
\end{array}
)P_{2}+(
\begin{array}{cc}
0 & D_{1}^{\top} \\
\end{array}
)(I-P_{2}\mathbf{D}_{1})^{-1}P_{2}\mathbf{J}_{2}\\
&-(
\begin{array}{cc}
0 & D_{1}^{\top} \\
\end{array}
)(I-P_{2}\mathbf{D}_{1})^{-1}P_{2}\mathbf{D}_{2}\widetilde{N}_{2}^{-1}\mathbf{S}_{2}
-D_{1}^{\top}P_{1}D_{2}\widetilde{N}_{2}^{-1}\mathbf{S}_{2}\Big]X\\
&-\widetilde{N}_{1}^{-1}\Big[(
\begin{array}{cc}
\widehat{S}_{1} & 0 \\
\end{array}
)+(
\begin{array}{cc}
0 & B_{1}^{\top} \\
\end{array}
)\widehat{P}_{2}+(
\begin{array}{cc}
0 & D_{1}^{\top} \\
\end{array}
)(I-P_{2}\mathbf{D}_{1})^{-1}P_{2}\widehat{\mathbf{J}}_{2}\\
&-(
\begin{array}{cc}
0 & D_{1}^{\top} \\
\end{array}
)(I-P_{2}\mathbf{D}_{1})^{-1}P_{2}\mathbf{D}_{2}\widetilde{N}_{2}^{-1}\widehat{\mathbf{S}}_{2}
-D_{1}^{\top}P_{1}D_{2}\widetilde{N}_{2}^{-1}\widehat{\mathbf{S}}_{2}\Big]\widehat{X}.
\end{aligned}
\end{equation}
The proof is completed.
\end{proof}
\begin{remark}
Up to now, we have completely solved our LQ leader-follower stochastic differential game for
mean-field switching diffusion. It turns out that the game admits an open-loop Stackelberg
equilibrium $(u_{1}^{*},u_{2}^{*})$ with a non-anticipating state feedback representation
(\ref{follower non anticipating}) and (\ref{leader optimal control}), respectively.
\end{remark}
Finally, we provide a numerical example to illustrate the effectiveness of our
theoretical results. Note that the optimal controls (\ref{follower non anticipating})
for the follower and (\ref{leader optimal control}) for the leader as well as
the value of the game (\ref{leader optimal cost}) depend only on the solutions
$P_{1}$, $\widetilde{P}_{1}$, $P_{2}$, $\widetilde{P}_{2}$ to Riccati equations
(\ref{follower 1}), (\ref{follower 5}), (\ref{leader 1}), (\ref{leader 4}), respectively.
So, in order to implement our control policies in practice, the \emph{whole} task for us
is to compute $P_{1}$, $\widetilde{P}_{1}$, $P_{2}$, $\widetilde{P}_{2}$.
\begin{example}
Let $n=m_{1}=m_{2}=1$ and $T=1$. Consider the following state equation:
\begin{equation*}
\left\{
\begin{aligned}
dX(t)=&[B_{1}(\alpha(t))u_{1}(t)+B_{2}u_{2}(t)]dt+CX(t)dW(t),\\
X(0)=&x_{0},
\end{aligned}
\right.
\end{equation*}
where $\alpha(\cdot)$ is a two-state Markov chain taking values in $\mathcal{M}=\{1,2\}$
with generator
\begin{equation*}
\begin{aligned}
\left[
  \begin{array}{cc}
    -1 & 1 \\
    1 & -1 \\
  \end{array}
\right],
\end{aligned}
\end{equation*}
and $B_{1}(1)=2$, $B_{1}(2)=1$, $B_{2}=1$, $C=0.5$.

The cost functionals for the follower and the leader are given by
\begin{equation*}
\begin{aligned}
J_{k}(u_{1}(\cdot),u_{2}(\cdot))=\frac{1}{2}
E\bigg[\int_{0}^{1}N_{k}u_{k}^{2}(t)dt+G_{k}X^{2}(1)
+\widehat{G}_{k}(E[X(1)|\mathcal{F}_{1}^{\alpha}])^{2}\bigg],
\end{aligned}
\end{equation*}
where $N_{k}=1$, $G_{k}=1$, $\widehat{G}_{k}=0.5$, $k=1,2$, respectively.
Note that in this example, to exhibit the effect of regime switching more clearly,
we only let $B_{1}$ vary depending on the Markov chain and keep all the other parameters
fixed as constants.

Then, $P_{1}(t,i)$, $\widetilde{P}_{1}(t,i)$, $P_{2}^{(11)}(t,i)$, $\widetilde{P}_{2}^{(11)}(t,i)$,
$i\in\{1,2\}$, on $[0,1]$ are computed and plotted in Figures \ref{FR} and \ref{LR}, respectively.
It is mentioned that the other elements of the matrix-valued functions $P_{2}(t,i)$ and
$\widetilde{P}_{2}(t,i)$, $i\in\{1,2\}$, are not plotted for simplicity.
\begin{figure}
\centering
\includegraphics[width=5.5in]{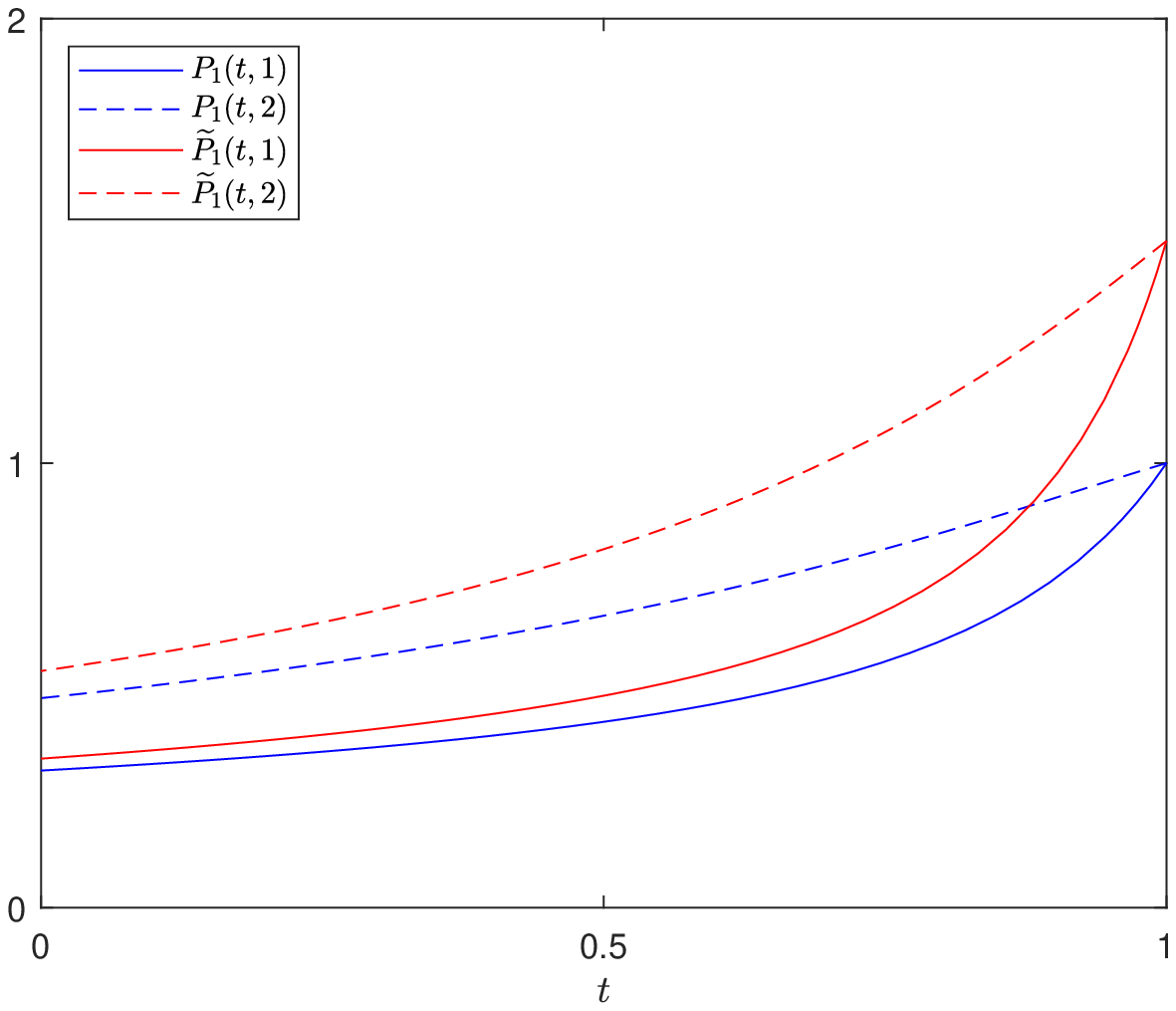}
\caption{Riccati equations for the follower}\label{FR}
\includegraphics[width=5.5in]{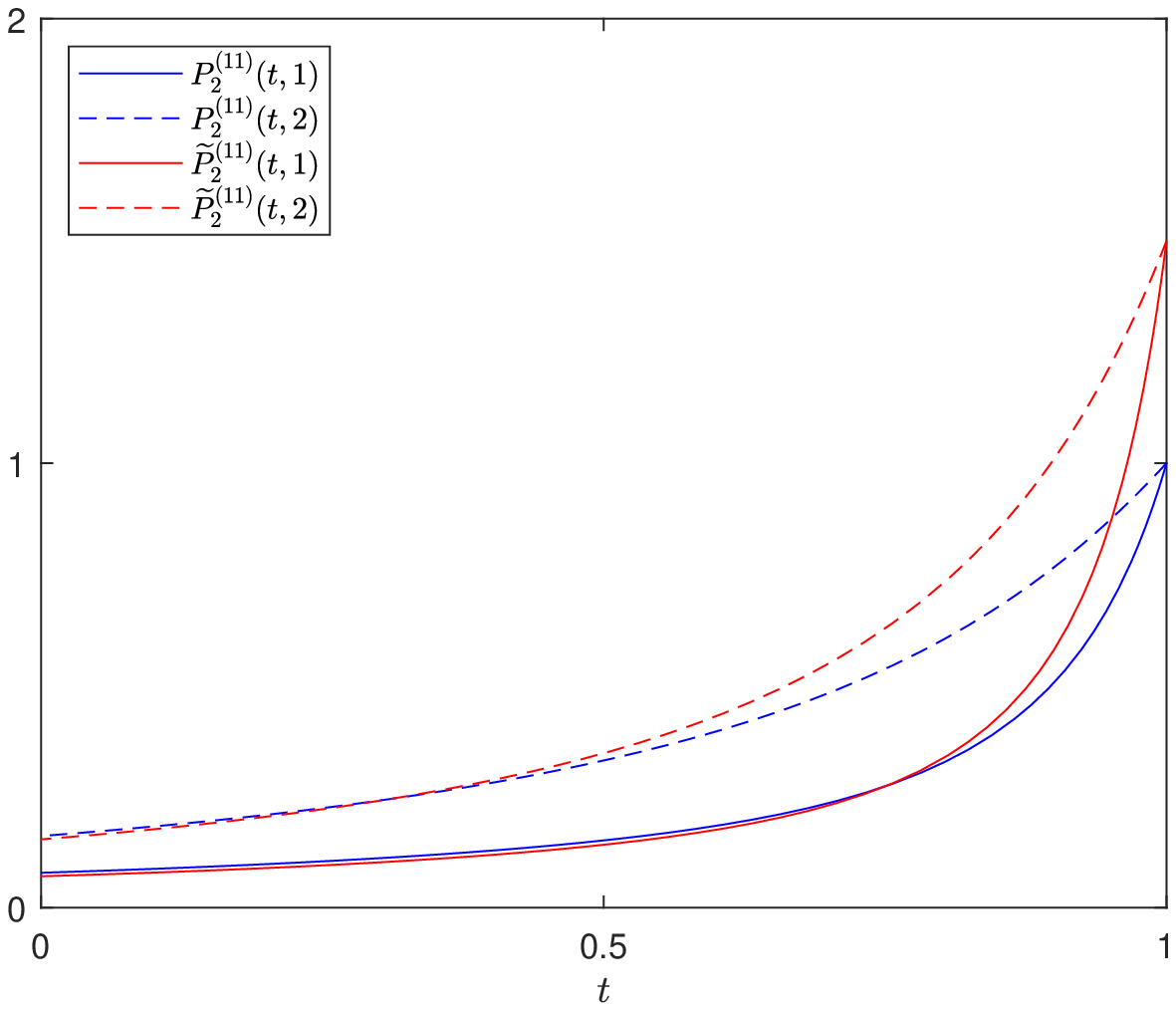}
\caption{Riccati equations for the leader}\label{LR}
\end{figure}
\end{example}

\section{Concluding remarks}\label{conclusion}

In this paper, we studied an LQ leader-follower stochastic differential game
with regime switching and mean-field interactions. Conditional mean-field terms
are included due to the presence of a Markov chain (just like a \emph{common noise}).
Some new-type Riccati equations are introduced for the first time in the literature.
The open-loop Stackelberg equilibrium and its non-anticipating state feedback
representation are obtained. There are several interesting problems that deserve
further investigation, in particular, the existence and uniqueness results of
the Riccati equations (\ref{leader 1}) and (\ref{leader 4}).


\begin{thebibliography}{00}

\bibitem{ABC2019}
B. Acciaio, J. Backhoff-Veraguas, R. Carmona,
Extended mean field control problems: stochastic maximum principle and transport perspective,
SIAM J. Control Optim.,
57 (2019), 3666--3693.

\bibitem{BDTY2020}
A. Bensoussan, B. Djehiche, H. Tembine, S. C. P. Yam,
Mean-field-type games with jump and regime switching,
Dyn. Games Appl.,
10 (2020), 19--57.

\bibitem{BDLP2009}
R. Buckdahn, B. Djehiche, J. Li, S. Peng,
Mean-field backward stochastic differential equations: A limit approach,
Ann. Probab.,
37 (2009), 1524--1565.

\bibitem{BLP2009}
R. Buckdahn, J. Li, S. Peng,
Mean-field backward stochastic differential equations and related partial differential equations,
Stoch. Process. Appl.,
119 (2009), 3133--3154.

\bibitem{CD2013-1}
R. Carmona, F. Delarue,
Probabilistic analysis of mean-field games,
SIAM J. Control Optim.,
51 (2013), 2705--2734.

\bibitem{CD2013-2}
R. Carmona, F. Delarue,
Mean field forward-backward stochastic differential equations,
Electron. Commun. Probab.,
18 (2013), no. 68, 15 pp.

\bibitem{CD2015}
R. Carmona, F. Delarue,
Forward-backward stochastic differential equations and controlled McKean-Vlasov dynamics,
Ann. Probab., 43 (2015), 2647--2700.

\bibitem{CD2018}
R. Carmona, F. Delarue,
Probabilistic Theory of Mean Field Games with Applications
\uppercase\expandafter{\romannumeral1}\&\uppercase\expandafter{\romannumeral2},
Springer, Cham, 2018.

\bibitem{CDL2016}
R. Carmona, F. Delarue, D. Lacker,
Mean field games with common noise,
Ann. Probab., 44 (2016), 3740--3803.

\bibitem{CZ2016}
R. Carmona, X. Zhu,
A probabilistic approach to mean field games with major and minor players,
Ann. Appl. Probab., 26 (2016), 1535--1580.

\bibitem{JLSY}
J. Jian, P. Lai, Q. Song, J. Ye,
LQG mean field games with a Markov chain as its common niose,
arXiv: 2106.04762v2 [math.OC], 2021.

\bibitem{JR2001}
R. Josa-Fombellida, J. P. Rinc\'{o}n-Zapatero,
Minimization of risks in pension funding by means of contributions and portfolio selection,
Insurance Math. Econom.,
29 (2001), 35--45.

\bibitem{KS}
I. Karatzas, S. E. Shreve,
Broanian Motion and Stochastic Calculus,
Springer, New York, 1991.

\bibitem{HWX2009}
J. Huang, G. Wang, J. Xiong,
A maximum principle for partial information backward stochastic control problems
with applications,
SIAM J. Control Optim.,
48 (2009), 2106--2117.

\bibitem{HCM2007}
M. Huang, P. E. Caines, R. P. Malham\'{e},
Large-population cost-coupled LQG problems with nonuniform agents:
individual-mass behavior and decentralized $\varepsilon$-Nash equilibria,
IEEE Trans. Automat. Control,
52 (2007), 1560--1571.

\bibitem{Li2012}
J. Li,
Stochastic maximum principle in the mean-field controls,
Automatica,
48 (2012), 366--373.

\bibitem{LXY2021}	
N. Li, J. Xiong, Z. Yu,
Linear-quadratic generalized Stackelberg games with jump-diffusion processes
and related forward-backward stochastic differential equations,
Science China Math.,
64 (2021), 2091--2116.

\bibitem{LZ2008}
T, Li, J. F. Zhang,
Asymptotically optimal decentralized control for large population stochastic multiagent systems,
IEEE Trans. Automat. Control,
53 (2008), 1643--1660.

\bibitem{LSX2019}
X. Li, J. Sun, J. Xiong,
Linear quadratic optimal control problems
for mean-field backward stochastic differential equations,
Appl. Math. Optim.,
1 (2019), 223--250.

\bibitem{LWZ2022}
S. Lv, Z. Wu, Q. Zhang,
The Dynkin game with regime switching and applications to
pricing game options,
Ann. Oper. Res., 313 (2022), 1159--1182.

\bibitem{LX2022}
S. Lv, J. Xiong,
Hybrid optimal impulse control,
Automatica, 140 (2022), 110233.

\bibitem{MPY1994}
J. Ma, P. Protter, J. Yong,
Solving forward-backward stochastic differential equations explicitly-a four-step scheme,
Probab. Theory Relat. Fields,
98 (1994), 339--359.

\bibitem{Moon2021}
J. Moon,
Linear-quadratic stochastic Stackelberg differential games for jump-diffusion systems,
SIAM J. Control Optim.,
59 (2021), 954--976.

\bibitem{NNY2020}
S. L. Nguyen, D. T. Nguyen, G. Yin,
A stochastic maximum principle for switching diffusions using conditional mean-fields
with applications to control problems,
ESAIM Control Optim. Calc. Var.,
26 (2020), 69.

\bibitem{NYH2020}
S. L. Nguyen, G. Yin, T. A. Hoang,
Laws of large numbers for systems with mean-field interactions and Markovian switching,
Stoch. Process. Appl.,
130 (2020), 262--296.

\bibitem{NYN2021}
S. L. Nguyen, G. Yin, D. T. Nguyen,
A general stochastic maximum principle for mean-field controls with regime switching,
Appl. Math. Optim.,
84 (2021), 3255--3294.

\bibitem{SWX2016}
J. Shi, G. Wang, J. Xiong,
Leader-follower stochastic differential game with asymmetric information and applications,
Automatica,
63 (2016), 60--73.

\bibitem{SSZ2011}
Q. Song, R. H. Stockbridge, C. Zhu,
On optimal harvesting problems in random environments,
SIAM J. Control Optim.,
49 (2011), 859--889.

\bibitem{WangWu2022}
G. Wang, Z. Wu,
A maximum principle for mean-field stochastic control system with noisy observation,
Automatica, 137 (2022), 110135.

\bibitem{WZZ2014}
G. Wang, C. Zhang, W. Zhang,
Stochastic maximum principle for mean-field type optimal control under partial information,
IEEE Trans. Automat. Control,
59 (2014), 522--528.

\bibitem{Xiong2008}
J. Xiong,
An Introduction to Stochastic Filtering Theory,
Oxford University Press, Oxford, 2008.

\bibitem{Yong2002}
J. Yong,
A leader-follower stochastic linear quadratic differential game,
SIAM J. Control Optim.,
41 (2002), 1015--1041.

\bibitem{Yong2013}
J. Yong,
Linear-quadratic optimal control problems for mean-field
stochastic differential equations,
SIAM J. Control Optim.,
51 (2013), 2809--2838.

\bibitem{Zhang2001}
Q. Zhang,
Stock trading: An optimal selling rule,
SIAM J. Control Optim.,
40 (2001), 64--87.

\bibitem{ZLX2021}
X. Zhang, X. Li, J. Xiong,
Open-loop and closed-loop solvabilities for stochastic linear quadratic
optimal control problems of Markovian regime switching system,
ESAIM Control Optim. Calc. Var.,
27 (2021), Paper No. 69, 35 pp.

\bibitem{ZSX2018}
X. Zhang, Z. Sun, J. Xiong,
A general stochastic maximum principle for a Markov-modulated jump-diffusion model of mean-field type,
SIAM J. Control Optim.,
56 (2018), 2563--2592.

\bibitem{ZhengShi2020}
Y. Zheng, J. Shi,
A Stackelberg game of backward stochastic differential equations with applications,
Dyn. Games Appl., 10 (2020), 968--992.

\bibitem{ZhouYin2003}
X. Y. Zhou, G. Yin,
Markowitz's mean-variance portfolio selection with regime switching:
A continuous-time model,
SIAM J. Control Optim.,
42 (2003), 1466--1482.

\bibitem{Zhu2011}
C. Zhu,
Optimal control of the risk process in a regime-switching environment,
Automatica,
47 (2011), 1570--1579.

\end{thebibliography}
\end{document}